\renewcommand\phi{\varphi}
\newcommand\R{\mathbb{R}}
\newcommand\hri[2]{^{\langle #1,#2\rangle}}
\newcommand\Urd[2]{\mathrm{U}_ {#1}^{#2}}
\DeclareMathOperator*{\Ehr}{E}
\newtheorem{thm}{Theorem}[section]
\newtheorem*{thm*}{Theorem}
\newtheorem{cor}[thm]{Corollary}
\newtheorem{lem}[thm]{Lemma}
\newtheorem{prop}[thm]{Proposition}
\newtheorem{obs}[thm]{Observation}
\newtheorem{quest}{Question}
\theoremstyle{definition}
\newtheorem{rem}[thm]{Remark}
\title[Symmetric decompositions and the Veronese construction]{Symmetric decompositions and the Veronese construction}
\author{Katharina Jochemko}
\address{Department of Mathematics, %
Royal Institute of Technology (KTH), %
Sweden}
\email{jochemko@kth.se}
\keywords{real-rooted polynomials, formal power series, Veronese construction, Hilbert series, Ehrhart h*-polynomial, Gorenstein property}
\subjclass[2010]{05A15, 13A02, 26C10, 52B20}
\date{\today}
\begin{document}

\maketitle

\begin{abstract}
We study rational generating functions of sequences $\{a_n\}_{n\geq 0}$ that agree with a polynomial and investigate symmetric decompositions of the numerator polynomial for subsequences $\{a_{rn}\}_{n\geq 0}$. We prove that if the numerator polynomial for $\{a_n\}_{n\geq 0}$ is of degree $s$ and its coefficients satisfy a set of natural linear inequalities then the symmetric decomposition of the numerator for $\{a_{rn}\}_{n\geq 0}$ is real-rooted whenever $r\geq \max \{s,d+1-s\}$. Moreover, if the numerator polynomial for $\{a_n\}_{n\geq 0}$ is symmetric then we show that the symmetric decomposition for $\{a_{rn}\}_{n\geq 0}$ is interlacing. 

We apply our results to Ehrhart series of lattice polytopes. In particular, we obtain that the $h^\ast$-polynomial of every dilation of a $d$-dimensional lattice polytope of degree $s$ has a real-rooted symmetric decomposition whenever the dilation factor $r$ satisfies $r\geq \max \{s,d+1-s\}$. Moreover, if the polytope is Gorenstein then this decomposition is interlacing.
\end{abstract}

\section{Introduction}
For integers $r\geq 1$ we consider the operator $\Urd{r}{d}\colon \mathbb{R}[t]\rightarrow \mathbb{R}[t]$ defined in such a way that
\[
\sum _{n\geq 0} a_{rn}t^n =\frac{\Urd{r}{d} h(t)}{(1-t)^{d}}\quad \text{ whenever } \quad \sum _{n\geq 0} a_{n}t^n =\frac{h (t)}{(1-t)^{d}}
\]
for polynomials $h(t)\in \mathbb{R}[t]$. We investigate properties of $\Urd{r}{d} h (t)$ as a function of $r$. We study symmetric decompositions of $\Urd{r}{d+1} h(t)$ and their roots for polynomials $h(t)=h_0+h_1t+\cdots h_st^s$, $s\leq d$,  with nonnegative coefficients that satisfy both the following sets of inequalities
\begin{itemize}
\item[\hypertarget{H}{(H)}] $h_0+h_1+\cdots + h_{i}\geq h_d+h_{d-1}+\cdots +h_{d-i+1}$  for all $i$, and
\item[\hypertarget{S}{(S)}] $h_0+h_1+\cdots + h_{i}\leq h_s+h_{s-1}+\cdots +h_{s-i}$  for all $i$,
\end{itemize}
where $h_i:=0$ for all $i\not \in \{0,1,\ldots, s\}$.

Our motivation comes from Ehrhart theory, and more generally, commutative algebra. If $\sum _{n\geq 0}a_nt^n$ is the Hilbert series of a graded ring $R$ then $\sum _{n\geq 0}a_{nr}t^n$ is the Hilbert series of its $r$-th Veronese subalgebra $R^{\langle r\rangle}$. In Ehrhart theory, which is concerned with the enumeration of lattice points in dilates of lattice polytopes, $\Urd{r}{d+1} h^\ast _P(t)$ is equal the $h^\ast$-polynomial of the $r$-th dilate of a $d$-dimensional lattice polytope $P$. The Properties \hyperlink{H}{(H)} and \hyperlink{S}{(S)} naturally appear in this context: Property \hyperlink{S}{(S)} and the nonnegativity of the coefficients of $h(t)$  were proved by Stanley~\cite{StanleyS} for Hilbert series of semistandard graded Cohen-Macaulay domains. Property \hyperlink{H}{(H)} was proved to hold by Hibi~\cite{HibiH} for $h^\ast$-polynomials of lattice polytopes, using methods from commutative algebra.

The limiting behavior of $\Urd{r}{d} h(t)$ was studied by Beck and Stapledon~\cite{Beck2010}, Brenti and Welker~\cite{BrentiWelker}, and Diaconis and Fulman~\cite{Diaconis2} who proved under mild conditions on $h(t)$ that $\Urd{r}{d} h (t)$ eventually has only real roots. In~\cite{jochemko2018} it was shown that for every polynomial $h(t)$ with nonnegative coefficients $\Urd{r}{d} h(t)$ has only real roots whenever $r\geq \deg h$ thereby proving a uniform bound conjectured by Beck and Stapledon~\cite{Beck2010}. See also~\cite{Zhang} for related work. In the present article we provide a uniform bound on $r$ for which the symmetric decomposition of  $\Urd{r}{d} h(t)$ has only real zeros.

With basic linear algebra it can be observed that for every polynomial of degree at most $d$ there are uniquely determined symmetric polynomials $p(t)=t^dp(1/t)$ and $q(t)=t^{d-1}q(1/t)$ such that $h(t)=p(t)+tq(t)$. The pair $(p,q)$ is called the \textbf{symmetric decomposition} (with respect to degree $d$) of $h(t)$. In case $h(t)$ is the $h^\ast$-polynomial of a lattice polytope with an interior lattice point Betke and McMullen~\cite{BetkeMcMullen} gave a combinatorial interpretation for $p(t)$ and $q(t)$ from which the nonnegativity of their coefficients follows. Stapledon~\cite{stapledon2009inequalities} considered more general decompositions (see Lemma~\ref{lem:extendedStapledon}) in order to study inequalities amongst the coefficients of the $h^\ast$-polynomial of arbitrary lattice polytopes. In particular, he obtained purely combinatorial proofs for the  Properties \hyperlink{H}{(H)} and \hyperlink{S}{(S)} in case $h(t)$ is the $h^\ast$-polynomial of a lattice polytope.

Br\"and\'en and Solus~\cite{branden2018symmetric} recently initiated a systematic study of symmetric decompositions for which $p$ and $q$ have only nonnegative coefficients and only real roots. Such decompositions are called \textbf{nonnegative} and \textbf{real-rooted}. Real-rooted symmetric decompositions and real-rooted polynomials in general are of current particular interest in combinatorics  (see, e.g., \cite{athanasiadis2020,haglund2019real,hlavacek2020subdivisions,savage2015s}), and especially also in Ehrhart theory (see, e.g., \cite{jochemkozonotopes,ferroni2020ehrhart,higashitani19,SolusSimplices}), due to their applicability to unimodality questions.

A polynomial $h(t)=h_0+h_1t+\cdots + h_dt^d$ is called \textbf{unimodal} if $h_0\leq h_1 \leq \cdots \leq h_k \geq \cdots \geq h_d$ for some $k$. A central open conjecture in Ehrhart theory going back to Stanley~\cite{Stanleyunimodal} states that the $h^\ast$-polynomial of every lattice polytope having the integer decomposition property (IDP) is unimodal. Schepers and Van Langenhoven~\cite{Schepers} strenghtened this conjecture for IDP polytopes with interior lattice points by asking if every such polytope has an alternatingly increasing $h^\ast$-polynomial. The alternating increasing property is equivalent to both parts $p(t)$ and $q(t)$ in the symmetric decomposition being nonnegative and unimodal~\cite{jochemkozonotopes}. One way of proving that a polynomial with nonnegative coefficients is unimodal is by showing that it has only real roots. In fact, even stronger, if $h(t)=h_0+h_1t+\cdots + h_dt^d$ is real-rooted and has only nonnegative coefficients then it is \textbf{log-concave}, that is, $h_{i-1}h_{i+1}\leq h_i^2$ is satisfied for all $i$ (see, e.g., \cite[Lemma 1.1]{Brandenunimodal}). For further reading on real-rooted, log-concave and unimodal polynomials and their applications we recommend~\cite{Brandenunimodal,Braununimodal,Brentiunimodal,Stanleyunimodal}.

By a result of Bruns, Gubeladze and Trung~\cite{Bruns} every lattice polytope eventually becomes IDP under dilations. Hering~\cite{Hering} moreover showed that if $P$ is a lattice polytope whose $h^\ast$-polynomial has degree $s$ then $rP$ is IDP whenever $r\geq s$. On the other hand, $r=d+1-s$ is the smallest integer such $rP$ has an interior lattice point. In light of Schepers and Van Langenhoven's question~\cite{Schepers} it is natural to study the symmetric decomposition of the $h^\ast$-polynomial under dilation and, in particular, for dilation factors $r\geq \max\{s,d+1-s\}$. This corresponds to studying $\Urd{r}{d+1} h^\ast _P(t)$ for  $r\geq \max\{s,d+1-s\}$.

In Section~\ref{sec:symmetricdecomp} we derive an explicit formula for the symmetric decomposition of $\Urd{r}{d+1}h(t)$ (Proposition~\ref{prop:formula}) by generalizing the decompositions studied by Stapledon in \cite{stapledon2009inequalities} (Proposition~\ref{prop:decomp2}). In Section~\ref{sec:realrootedness}, we study the roots of these symmetric decompositions. The following is our main result.
\begin{thm}\label{thm:main}
Let $h(t)=h_0+h_1t+\cdots +h_st^s$ be a polynomial of degree $s\leq d$ with nonnegative coefficients such that
\begin{itemize}
\item[(H)] $h_0+h_1+\cdots + h_{i}\geq h_d+h_{d-1}+\cdots +h_{d-i+1}$  for all $i$, and
\item[(S)] $h_0+h_1+\cdots + h_{i}\leq h_s+h_{s-1}+\cdots +h_{s-i}$  for all $i$.
\end{itemize}
Then the polynomial $\Urd{r}{d+1} h(t)$ has a nonnegative real-rooted symmetric decomposition whenever $r\geq \max \{s, d+1-s\}$.
\end{thm}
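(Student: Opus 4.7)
The plan is to combine three ingredients: the explicit formula for the symmetric decomposition of $\Urd{r}{d+1}h(t)$ established in Proposition~\ref{prop:formula}, the generalized Stapledon-type decomposition in Proposition~\ref{prop:decomp2}, and the main real-rootedness result of \cite{jochemko2018}, which asserts that $\Urd{r}{k}f(t)$ is real-rooted whenever $f$ has nonnegative coefficients and $r\geq \deg f$.

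First I would invoke Proposition~\ref{prop:formula} to write
\[
\Urd{r}{d+1} h(t) \;=\; p_r(t) + t\,q_r(t),
\]
with $p_r$ symmetric of degree $d$ and $q_r$ symmetric of degree $d-1$, and then use Proposition~\ref{prop:decomp2} (which generalizes Stapledon's decomposition) to re-express the two halves in the form
\[
p_r(t) \;=\; \Urd{r}{a}\, f_H(t), \qquad q_r(t) \;=\; \Urd{r}{b}\, f_S(t),
\]
for explicit auxiliary polynomials $f_H$ and $f_S$ whose coefficients are the partial-sum differences appearing in the hypotheses. The coefficient of $t^i$ in $f_H$ is essentially $(h_0+\cdots+h_i)-(h_d+\cdots+h_{d-i+1})$, which is nonnegative by (H), and the coefficient of $t^i$ in $f_S$ is essentially $(h_s+\cdots+h_{s-i})-(h_0+\cdots+h_i)$, which is nonnegative by (S). A direct degree count using $\deg h=s$ should yield $\deg f_H\leq s$ and $\deg f_S\leq d+1-s$ (or the symmetric assignment).

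Having established this, nonnegativity of $p_r$ and $q_r$ is automatic: the operator $\Urd{r}{k}$ sends polynomials with nonnegative coefficients to polynomials with nonnegative coefficients, because Veronese subalgebras inherit a nonnegative combinatorial structure, and this is already exploited in \cite{jochemko2018}. For real-rootedness, I would apply the uniform bound of \cite{jochemko2018} separately to $f_H$ and $f_S$: once $r\geq \deg f_H$ and $r\geq \deg f_S$, both $p_r=\Urd{r}{a}f_H$ and $q_r=\Urd{r}{b}f_S$ are real-rooted polynomials with nonnegative coefficients. Combining the two degree bounds gives exactly $r\geq \max\{s,\,d+1-s\}$, which is the hypothesis of the theorem.

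The main obstacle is really absorbed into Proposition~\ref{prop:formula} and Proposition~\ref{prop:decomp2}: one has to identify the correct auxiliary polynomials $f_H, f_S$ of the right degrees so that (a) the Stapledon-style decomposition $h=p+tq$ is compatible with $\Urd{r}{d+1}$ applied term-by-term, and (b) the partial sums controlled by (H) and (S) reappear precisely as the coefficients of $f_H$ and $f_S$. Once the identification is in place, the theorem is a one-line consequence of the real-rootedness bound of \cite{jochemko2018} applied twice.
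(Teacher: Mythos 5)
Your plan breaks down at the degree count, and this is not a minor slip: it is the crux of the whole theorem. The auxiliary polynomial $f_H$ in your decomposition $\tilde p = \Urd{r}{d} f_H$ is exactly $p$, the symmetric part of $h$, with $p_i = h_0+\cdots+h_i - h_d - \cdots - h_{d-i+1}$ as given by Lemma~\ref{lem:extendedStapledon}. But $p$ is symmetric of degree $d$: its leading coefficient is $p_d = p_0 = h_0$, so $\deg p = d$ whenever $h_0 > 0$ (which holds in all cases of interest, e.g.\ $h^\ast$-polynomials). Your claim that ``a direct degree count using $\deg h = s$ should yield $\deg f_H \leq s$'' is therefore false, and feeding $p$ into the main theorem of~\cite{jochemko2018} (real-rootedness for $r\geq \deg$) only gives $r\geq d$, which is far weaker than the asserted $r\geq\max\{s,d+1-s\}$. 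The entire point of the theorem is to beat the naive degree bound, and the paper does so via Lemma~\ref{lem:arealrooted}, a genuine strengthening of~\cite{jochemko2018}: it uses not the degree of $p$ but the structural fact that $p_0\leq p_1\leq\cdots\leq p_{\ell-1}$ and $p_{d+1-\ell}\geq\cdots\geq p_d$ (a consequence of $h_i=0$ for $i>s$), which forces each slice $p\hri{r}{i}$ to be linear with a controlled sign pattern, and hence $(p\hri{r}{r-1},\ldots,p\hri{r}{0})$ to be an interlacing sequence already for $r\geq\max\{s,\tfrac{d+1}{2}\}$.

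Your plan for $\tilde q$ has a second problem. You propose $\tilde q = \Urd{r}{b} f_S$ for a fixed $f_S$ of degree $\leq d+1-s$ with coefficients $(h_s+\cdots+h_{s-i})-(h_0+\cdots+h_i)$. But Proposition~\ref{prop:representation} gives $t\tilde q(t) = \bigl(t^\ell w_r(t)\,(1+t+\cdots+t^{r-1})^d\bigr)\hri{r}{0}$, and $w_r$ depends on $r$ with $\deg(t^\ell w_r) = r+s-1$. There is no fixed $f_S$ of small degree such that $\tilde q = \Urd{r}{b} f_S$; the paper instead carries out a case analysis on $s$ versus $\ell$, computing the slices $(t^\ell w_r)\hri{r}{i}$ explicitly and verifying by hand that they form an interlacing sequence so that Lemma~\ref{lem:commoninterlacer} applies. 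So while your overall architecture (Propositions~\ref{prop:formula} and~\ref{prop:decomp2}, then conclude real-rootedness half by half) matches the paper, the step where you claim the theorem is ``a one-line consequence of the real-rootedness bound of~\cite{jochemko2018} applied twice'' is precisely where a new, strictly stronger ingredient is required, and your degree accounting cannot supply it.
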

This theorem strengthens results by Higashitani~\cite{higashitani2009} who proved that under the same conditions $\Urd{r}{d+1} h(t)$ has a log-concave symmetric decomposition whenever $r\geq \max \{s, d+1-s\}$.

To prove Theorem~\ref{thm:main} we use the method of interlacing polynomials, a powerful method that gained a lot of recent attention due to its role in the proof of the longstanding Kadison Singer Conjecture by Marcus, Spielman and Srivastava~\cite{KadisonSinger}. A real-rooted polynomial $f\in \mathbb{R}[t]$ with roots $s_k\leq s_{k-1}\leq \cdots \leq s_1$ is said to \textbf{interlace} a real-rooted polynomial $g\in\mathbb{R}[t]$ with roots $t_m\leq t_{m-1}\leq \cdots \leq t_1$ and we write $f\preceq g$ if
\[
 \ldots \leq s_2\leq t_2\leq s_1\leq t_1 \, .
\]
In Section~\ref{sec:preliminaries} we collect necessary preliminaries on interlacing polynomials, the operator $U_r^d$ as well as symmetric decompositions. 

We call a symmetric decomposition $(p,q)$ \textbf{interlacing} if $q\preceq p$. In Section~\ref{sec:interlacing} we investigate interlacing symmetric decompositions of $\Urd{r}{d+1}h(t)$. We provide a simple characterization (Proposition~\ref{lem:essential}) and use it to prove the following.
\begin{thm}\label{thm:main2}
Let $h(t)$ be a polynomial of degree $s\leq d$ with nonnegative coefficients and such that $h(t)=t^{s}h(1/t)$. Then $\Urd{r}{d+1}  h(t)$ has a nonnegative interlacing symmetric decomposition for all $r\geq \max\{s,d+1-s\}$.
\end{thm}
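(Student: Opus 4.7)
The plan is to combine the explicit formula for the symmetric decomposition of $\Urd{r}{d+1}h(t)$ from Proposition~\ref{prop:formula} with the interlacing criterion of Proposition~\ref{lem:essential}, exploiting the extra symmetry $h(t) = t^s h(1/t)$ throughout.

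First, I would check that Theorem~\ref{thm:main} applies, so that real-rootedness and nonnegativity of the symmetric decomposition are already available and only the interlacing refinement is left. Under the assumption $h_i = h_{s-i}$, condition (S) holds with equality, and condition (H) holds automatically: when $s=d$ it reduces to $h_i \geq 0$ via the symmetry $h_j = h_{d-j}$; when $s < d$, the right-hand side of (H) either vanishes (when $i < d-s+1$, since $h_j = 0$ for $j>s$) or, after reindexing using symmetry, becomes a partial sum $\sum_{j=0}^{s-d+i-1} h_j$ whose upper index is strictly smaller than $\min(i,s)$, so (H) holds by nonnegativity. Consequently, Theorem~\ref{thm:main} produces a nonnegative real-rooted symmetric decomposition $(p,q)$ of $\Urd{r}{d+1}h(t)$ for $r \geq \max\{s,d+1-s\}$.

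Next, I would invoke Proposition~\ref{lem:essential} to reduce $q \preceq p$ to a verifiable structural condition on $p$ and $q$. Using the explicit formula from Proposition~\ref{prop:formula}, I would substitute the palindromic identities $h_i = h_{s-i}$ to simplify the expressions for $p$ and $q$ substantially. The symmetry should force $p$ and $q$ to decompose as nonnegative combinations of a common interlacing family of polynomials, most likely built from expressions in the same spirit as the extended Stapledon decomposition (Lemma~\ref{lem:extendedStapledon}) but now with paired terms of equal coefficients due to the symmetry of $h$. Once such a common interlacing family is identified, the interlacing preliminaries recalled in Section~\ref{sec:preliminaries} directly yield $q \preceq p$.

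The main obstacle will be Step~3: extracting the correct interlacing family. Theorem~\ref{thm:main} only gives that $p$ and $q$ are separately real-rooted, which is strictly weaker than $q \preceq p$, so the argument must genuinely use the symmetry of $h$ rather than only its consequences (H) and (S). The delicate point is matching the pairing of coefficients of $h$ under $h_i \leftrightarrow h_{s-i}$ with a pairing of basis polynomials that is compatible with interlacing after being transformed by $\Urd{r}{d+1}$; once this pairing is made explicit, the verification of the hypotheses of Proposition~\ref{lem:essential} should reduce to a direct calculation.
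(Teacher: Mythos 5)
Your setup is correct: properties (H) and (S) do hold automatically for symmetric $h$ with nonnegative coefficients, and Proposition~\ref{lem:essential} is indeed the right reduction. But note what that reduction actually asks for: $\Urd{r}{d+1}h(t) \preceq \Urd{r}{d+1}(\mathcal{I}_{d+1}h(t))$, a condition on $\Urd{r}{d+1}h$ and its reverse — \emph{not} a condition on $p$ and $q$ — so there is no reason to return to Proposition~\ref{prop:formula} afterwards as you suggest. The genuine gap is in your Step~3, where you gesture at a ``common interlacing family \ldots built in the spirit of the extended Stapledon decomposition''; the argument does not go through the Stapledon decomposition at all, and the sketch does not close.

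The two missing ingredients are these. First, the symmetry $h(t)=t^sh(1/t)$ gives the key simplification $\mathcal{I}_{d+1}h(t)=t^{d+1}h(1/t)=t^{d+1-s}h(t)$, so the criterion becomes $\Urd{r}{d+1}h(t)\preceq\Urd{r}{d+1}\bigl(t^{d+1-s}h(t)\bigr)$ — i.e.\ a comparison of $\Urd{r}{d+1}$ applied to $h$ versus to a monomial shift of $h$. Second, and this is the real technical content, one needs a new interlacing lemma (Lemma~\ref{lem:ans2}) for the $h$-dependent polynomials $a_{h,d}\hri{r}{i}:=\bigl(h(t)(1+t+\cdots+t^{r-1})^d\bigr)\hri{r}{i}$: if $h$ has nonnegative coefficients and $r\geq\deg h$, then $\bigl(a_{h,d}\hri{r}{r-1},\ldots,a_{h,d}\hri{r}{0}\bigr)$ is an interlacing sequence. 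This is a genuine generalization of Proposition~\ref{prop:ans}, proved by induction via a recursion relating $a_{h,d+1}\hri{r}{i}$ to the $a_{h,d}\hri{r}{j}$. Once it is in hand, both sides of the needed inequality are literal members of the interlacing sequence, since $\Urd{r}{d+1}h(t)=a_{h,d+1}\hri{r}{0}$ and $\Urd{r}{d+1}(t^kh(t))=t\,a_{h,d+1}\hri{r}{r-k}$ for $1\leq k\leq r$; the condition $r\geq d+1-s$ is exactly what makes the index $r-(d+1-s)$ admissible, and $r\geq s$ is exactly what activates Lemma~\ref{lem:ans2}. Your proposal identifies neither the monomial-shift observation nor this interlacing family, so it cannot be completed as written.
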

We furthermore show that $\Urd{r}{d+1}  h(t)$ has a nonnegative interlacing symmetric decomposition for all polynomials $h(t)$ with nonnegative coefficients and $\deg h(t)\leq \frac{d+1}{2}$ whenever $r\geq d+1$ (Proposition~\ref{prop:small}).

We conclude in Section~\ref{sec:Ehrharttheory} by applying our results to $h^\ast$-polynomials of dilated lattice polytopes.  In particular, we obtain that the $h^\ast$-polynomial of every dilation of a $d$-dimensional lattice polytope of degree $s$ has a real-rooted symmetric decomposition whenever the dilation factor $r$ satisfies $r\geq \max \{s,d+1-s\}$ (Corollary~\ref{cor:Ehrrealrooted}). If the polytope is Gorenstein then this decomposition is also interlacing (Corollary~\ref{cor:Ehrinterlacing}). 

\section{Preliminaries}\label{sec:preliminaries}
\subsection{Symmetric Decompositions}
For a polynomial $h(t)\in \R[t]$ of degree at most $d$, we define
\[
\mathcal{I}_d (h(t)) = t^dh(1/t) \, .
\]
If $\mathcal I _d (h(t))=h(t)$ then $h(t)$ is called \textbf{symmetric} or \textbf{palindromic} (with center of symmetry at $d/2$.) From basic linear algebra it follows that for every polynomial $h(t)$ of degree at most $d$ there exist uniquely determined symmetric polynomials $p(t) = t^d p(1/t)$ and $q(t) = t^{d-1}q(1/t)$ such that $h(t) = p(t) + tq(t)$. We call
$(p,q)$ the \textbf{symmetric decomposition} of $h(t)$. The following was observed by Br\"and\'en and Solus~\cite{branden2018symmetric}.
\begin{lem}[{\cite{branden2018symmetric}}]\label{lem:solus}
Let $h(t)$ be a polynomial of degree at most $d$. Then its symmetric decomposition $(p,q)$ is given by
\[
p(t) = \frac{h(t)-t\mathcal{I}_d (h)}{1-t} \quad \text{ and } \quad q(t) = \frac{\mathcal{I}_d(h)-h(t)}{1-t} \, .
\]
\end{lem}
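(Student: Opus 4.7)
The plan is to verify directly that the pair $(p,q)$ exhibited in the lemma satisfies all properties of a symmetric decomposition, and then invoke the uniqueness asserted just before the lemma to conclude that this pair \emph{is} the symmetric decomposition of $h(t)$.

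First, I would check that the expressions on the right are genuinely polynomials, i.e.\ that $1-t$ divides both numerators. For $\mathcal{I}_d(h)-h(t)$ this is immediate from $\mathcal{I}_d(h)(1) = 1^d h(1) = h(1)$, and the same evaluation at $t=1$ handles $h(t) - t\mathcal{I}_d(h)$. Next, I would verify the decomposition identity by a one-line calculation:
\[
p(t) + tq(t) = \frac{h(t) - t\mathcal{I}_d(h) + t\bigl(\mathcal{I}_d(h) - h(t)\bigr)}{1-t} = \frac{(1-t)h(t)}{1-t} = h(t).
\]

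The main (and really only) step requiring care is to check the symmetry conditions $p(t) = t^d p(1/t)$ and $q(t) = t^{d-1} q(1/t)$. The key identity to exploit is that for any polynomial $h$ of degree at most $d$, one has $\mathcal{I}_d(h)(1/t) = t^{-d} h(t)$, which follows straight from the definition. Substituting $1/t$ for $t$ in the formula for $p$ and multiplying by $t^d$, then clearing the factor $1 - 1/t = (t-1)/t$, transforms the expression into $(t\mathcal{I}_d(h) - h(t))/(t-1) = p(t)$. The analogous manipulation, done on $q$ with the factor $t^{d-1}$, gives $q(t) = t^{d-1} q(1/t)$. These computations are short and mechanical; the only place to slip is keeping track of the power of $t$ produced when clearing $1 - 1/t$.

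Finally, since the preceding paragraph of the paper records that the symmetric decomposition is uniquely determined by the conditions $h(t) = p(t) + t q(t)$, $p(t) = t^d p(1/t)$, and $q(t) = t^{d-1} q(1/t)$, the pair exhibited must coincide with the symmetric decomposition of $h(t)$. I do not anticipate any nontrivial obstacle; the lemma is essentially a direct computation together with an appeal to the uniqueness statement recalled just before it.
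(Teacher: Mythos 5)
Your proof is correct: you verify that the two displayed expressions are polynomials, that they sum (with the factor $t$) to $h(t)$, and that they satisfy the required symmetries $p(t)=t^d p(1/t)$ and $q(t)=t^{d-1}q(1/t)$, then invoke uniqueness. The paper itself offers no proof for this lemma (it is cited from Br\"and\'en and Solus), so there is no argument to compare against, but your direct verification is the canonical one and the algebra checks out; in particular the symmetry computations do produce the claimed powers of $t$ after clearing $1-1/t=(t-1)/t$, and the degree bounds $\deg p\leq d$, $\deg q\leq d-1$ required for uniqueness follow automatically from the symmetry identities.
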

In~\cite{stapledon2009inequalities} Stapledon considered also more general symmetric decompositions. For a polynomial $h(t)$ of degree $s\leq d$ the \textbf{co-degree} of $h(t)$ is defined by $\ell:=d+1-s$. 

\begin{lem}[{\cite{stapledon2009inequalities}}]\label{lem:extendedStapledon}
Let $h(t)=\sum _{i=0}^s h_it^i$ be a polynomial of degree $s\leq d$ and let $(p,q)$ be its symmetric decomposition. Then there are uniquely determined symmetric polynomials $p_\ell(t)=\sum_{i=0}^{d}p_{\ell,i}t^i$ and $q_\ell(t)=\sum_{i=0}^{s-1}q_{\ell,i}t^i$ satisfying $p_\ell(t)=t^{d}p(1/t)$ and $q_\ell(t)=t^{s-1}q_\ell(1/t)$ such that
\begin{equation}\label{eq:extendedStapledon}
(1+t+\cdots +t^{\ell-1})h(t)=p_\ell(t)+t^\ell q_\ell(t) \, ,
\end{equation}
namely
\[
p_{\ell,i} \ = \ h_0+\cdots + h_i - h_{d}-\cdots - h_{d+1-i} \, 
\]
and
\[
q_{\ell,i} \ = \ -h_0-\cdots -h_i + h_s+\cdots + h_{s-i} \, ,
\]
where $h_i := 0$ whenever $i\not \in 
\{0,1,\ldots,s\}$. In particular, $p_\ell (t)$ equals $p(t)$ in the symmetric decomposition of $h(t)$.
\end{lem}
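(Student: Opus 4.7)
The plan is to bootstrap directly from Lemma~\ref{lem:solus}, showing that the palindromic part $p$ of the standard symmetric decomposition already serves as $p_\ell$, and isolating $q_\ell$ from the identity \eqref{eq:extendedStapledon}. The crucial observation that makes this work is the factorization
\[
\mathcal{I}_d(h(t)) \; = \; t^{d-s}\mathcal{I}_s(h(t)) \; = \; t^{\ell-1}\mathcal{I}_s(h(t)),
\]
which holds precisely because $\deg h \leq s$, and which meshes cleanly with $1+t+\cdots+t^{\ell-1} = (1-t^\ell)/(1-t)$.

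Concretely, I would set $p_\ell := p$ and define $q_\ell(t) := (\mathcal{I}_s(h(t))-h(t))/(1-t)$. Since $\mathcal{I}_s(h)(1)=h(1)$, the numerator vanishes at $t=1$, so $q_\ell$ is a polynomial of degree at most $s-1$. Using the formula for $p$ from Lemma~\ref{lem:solus} together with the factorization above, one computes
\[
(1+t+\cdots+t^{\ell-1})h(t) - p(t) \; = \; \frac{t\mathcal{I}_d(h)-t^\ell h(t)}{1-t} \; = \; \frac{t^\ell(\mathcal{I}_s(h)-h(t))}{1-t} \; = \; t^\ell q_\ell(t),
\]
which establishes \eqref{eq:extendedStapledon}. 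The palindromic relation $q_\ell(t) = t^{s-1}q_\ell(1/t)$ then follows from a direct computation using $\mathcal{I}_s(h)(1/t) = t^{-s}h(t)$ and $h(1/t) = t^{-s}\mathcal{I}_s(h)(t)$, which together yield $q_\ell(1/t) = t^{1-s}q_\ell(t)$. The displayed coefficient formulas are read off by expanding $\mathcal{I}_s(h)-h = \sum_{i=0}^s(h_{s-i}-h_i)t^i$ and using $[t^k]\bigl(f(t)/(1-t)\bigr) = \sum_{i=0}^k f_i$ for $q_{\ell,k}$, and similarly for $p_{\ell,k}$ via $p(t) = (h(t)-t\mathcal{I}_d(h))/(1-t)$; the convention $h_i = 0$ outside $\{0,\ldots,s\}$ handles the possibly overlapping index ranges uniformly.

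The remaining point, and the most delicate one conceptually, is uniqueness, which I expect to yield to a short palindromic manipulation. Suppose $P + t^\ell Q = 0$ with $P$ palindromic of degree at most $d$ and $Q$ palindromic of degree at most $s-1$. Substituting $t\mapsto 1/t$ in $P = -t^\ell Q$ and applying both palindromic relations together with the numerical identity $\ell + (s-1) = d$ gives
\[
t^{-d}P(t) \; = \; -t^{-\ell}\cdot t^{1-s}Q(t) \; = \; -t^{-d}Q(t),
\]
so $P = -Q$. Substituting back into $P = -t^\ell Q$ yields $(1-t^\ell)Q(t) = 0$, and since $\ell \geq 1$ we conclude $Q = P = 0$. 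This gives uniqueness of $(p_\ell, q_\ell)$, and the final assertion $p_\ell = p$ is built into the construction.
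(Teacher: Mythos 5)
Your argument is correct, and it takes a somewhat different route from the paper's. The paper does not prove Lemma~\ref{lem:extendedStapledon} directly (it is cited from Stapledon), but it does prove the generalization Proposition~\ref{prop:decomp2}; that proof proceeds by writing down the candidate coefficient formulas for $v_{r,i}$ and $w_{r,i}$ and verifying by hand, index by index, that they sum to the coefficients of $(1+t+\cdots+t^{r-1})h(t)$ and that they are palindromic, with uniqueness dismissed as ``easily verified.'' You instead take the closed-form expression $p(t)=(h(t)-t\mathcal{I}_d(h))/(1-t)$ from Lemma~\ref{lem:solus} as the starting point, extract the single structural identity $\mathcal{I}_d(h)=t^{\ell-1}\mathcal{I}_s(h)$ (which encodes $\deg h\le s$), and derive both the decomposition equation and the palindromicity of $q_\ell:=(\mathcal{I}_s(h)-h)/(1-t)$ from it algebraically, recovering the coefficient formulas afterwards by expanding $1/(1-t)$. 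Your uniqueness argument via $t\mapsto 1/t$ and $\ell+(s-1)=d$ is clean and fully spelled out, unlike the paper's. The trade-off is that your factorization trick is tuned to $r=\ell$: for general $r\geq\ell$ as in Proposition~\ref{prop:decomp2}, the quantity $t\mathcal{I}_d(h)-t^r h$ no longer factors as $t^\ell$ times a degree-$(s-1)$ palindrome, so the paper's coefficient-level computation extends to the more general setting more directly, while your route is shorter and more conceptual for the case at hand.
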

Underlying the arguments in~\cite{stapledon2009inequalities} is the following observation.
\begin{obs}\label{obs:obs3}
The polynomial $h(t)$ as given in Lemma~\ref{lem:extendedStapledon} satisfies the Properties \hyperlink{H}{(H)} and \hyperlink{S}{(S)} if and only if $p_\ell (t)=p(t)$ and $q_\ell (t)$ have nonnegative coefficients.
\end{obs}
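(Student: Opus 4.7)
The proof plan is to observe that the equivalence stated in Observation~\ref{obs:obs3} is essentially a re-reading of the explicit formulas already supplied by Lemma~\ref{lem:extendedStapledon}. First I would recall those formulas, namely
\[
p_{\ell,i} \;=\; (h_0+h_1+\cdots+h_i) - (h_d+h_{d-1}+\cdots+h_{d+1-i})
\]
and
\[
q_{\ell,i} \;=\; (h_s+h_{s-1}+\cdots+h_{s-i}) - (h_0+h_1+\cdots+h_i).
\]
From these, the condition $p_{\ell,i}\ge 0$ is literally the inequality in Property \hyperlink{H}{(H)} at index $i$, and the condition $q_{\ell,i}\ge 0$ is literally the inequality in Property \hyperlink{S}{(S)} at index $i$. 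The forward direction (assuming \hyperlink{H}{(H)} and \hyperlink{S}{(S)}) then yields nonnegativity of every coefficient of $p_\ell(t)$ and $q_\ell(t)$, while the reverse direction (assuming such nonnegativity) reads off \hyperlink{H}{(H)} and \hyperlink{S}{(S)} directly.

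The only point requiring a moment of care is the range of $i$ over which the equivalence needs to be checked. The polynomial $p_\ell(t)$ has coefficients indexed by $0\le i\le d$ and $q_\ell(t)$ by $0\le i\le s-1$, whereas Properties \hyperlink{H}{(H)} and \hyperlink{S}{(S)} are stated for all $i$. I would handle this by invoking the convention $h_i := 0$ for $i\notin\{0,\ldots,s\}$: for indices $i$ outside the supports of $p_\ell$ and $q_\ell$, the two sides of \hyperlink{H}{(H)} and \hyperlink{S}{(S)} either coincide or reduce to inequalities already obtained for smaller $i$, so no information is lost. There is no genuine obstacle in this argument; the content of the observation is precisely that the Stapledon coefficients $p_{\ell,i}$ and $q_{\ell,i}$ were defined as the slacks in the inequalities \hyperlink{H}{(H)} and \hyperlink{S}{(S)}.
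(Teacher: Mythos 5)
Your proof is correct and matches the paper's (implicit) reasoning: the paper states the observation without proof precisely because it is, as you say, a direct re-reading of the coefficient formulas $p_{\ell,i}$ and $q_{\ell,i}$ from Lemma~\ref{lem:extendedStapledon} as the slacks in (H) and (S). Your remark about the index range (indices beyond the degrees of $p_\ell$ and $q_\ell$ give trivial or redundant inequalities under the convention $h_i=0$) is the right small check to make.
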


\subsection{Interlacing polynomials}
A real-rooted polynomial $f\in \mathbb{R}[t]$ with roots $s_k\leq s_{k-1} \leq \cdots \leq s_1$ is said to \textbf{interlace} a real-rooted polynomial $g\in\mathbb{R}[t]$ with roots $t_m\leq t_{m-1} \leq \ldots \leq t_1$, and we write $f\preceq g$, if
\[
\ldots \leq s_2 \leq t_2 \leq s_1 \leq t_1 \, .
\]
In particular, $\deg g=m$ is equal to either $\deg f=k$ or $\deg f +1 =k+1$. For technical reasons we also set $0\preceq f$ and $f\preceq 0$ for all real-rooted polynomials $f$. The following lemma collects some basic facts about interlacing polynomials that can, for example, be found in~\cite{wagner1992total}.
\begin{lem}[{\cite[Section 3]{wagner1992total}}]\label{lem:basicsinterlace}
Let $f,g,h\in \R [t]$ be real-rooted polynomials with positive leading coefficients. Then
\begin{itemize}
\item[(i)] $g\preceq f$ if and only if $cg\preceq df$ for all $c,d\neq 0$.
\item[(ii)] $h\preceq f$ and $h\preceq g$ implies $h\preceq f+g$.
\item[(iii)] $f\preceq h$ and $g\preceq h$ implies $f+g\preceq h$.
\item[(iv)] $g\preceq f$ if and only if $f\preceq tg$, if $f$ and $g$ have only nonpositive roots.
\end{itemize}
\end{lem}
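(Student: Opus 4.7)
The plan is to verify each of the four claims directly from the definition of the interlacing relation, since these are classical facts about real-rooted polynomials.

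Part (i) is essentially tautological: the relation $g \preceq f$ is defined entirely in terms of the multisets of real roots of $g$ and $f$, and multiplication by a nonzero scalar does not change the roots of a polynomial. In particular, no positivity assumption on $c$ or $d$ is needed; the relation is invariant under arbitrary nonzero rescaling of either argument.

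For parts (ii) and (iii), which are dual statements, I would use a sign-alternation argument combined with the intermediate value theorem. Let $h$ have roots $s_k \le \cdots \le s_1$ and positive leading coefficient. Under the assumption $h \preceq f$ with $f$ also having positive leading coefficient, exactly $i$ roots of $f$ (counted with multiplicity) lie in $[s_i,\infty)$, and consequently $f(s_i)$ has sign $(-1)^i$ or vanishes. The same sign pattern holds for $g$, and therefore also for $f+g$. Applying the intermediate value theorem on each open interval $(s_{i+1},s_i)$ produces at least $k-1$ real roots of $f+g$. Comparing the sign of $f+g$ at $s_1$ with its behavior as $t\to+\infty$ supplies a further root in $(s_1,\infty)$; a parallel check at $s_k$ and $t\to-\infty$, split into the subcases $\deg(f+g)=k$ and $\deg(f+g)=k+1$, provides the remaining root when present. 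This yields the required interlacing $h \preceq f+g$, and (iii) follows by the mirror-image version of the same argument with the roles of outer and inner polynomials reversed, evaluating the sum at the roots of $h$ while noting that both $f$ and $g$ must inherit the sign pattern forced by $f \preceq h$ and $g \preceq h$.

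Part (iv) is a direct translation of inequalities. When $f$ and $g$ have only nonpositive roots, the roots of $tg$ consist of the roots of $g$ together with an additional simple root at $0$, which is automatically the largest (or, if $g(0)=0$, a root of multiplicity two at $0$). Writing $g \preceq f$ as the chain $\cdots \le s_2 \le t_2 \le s_1 \le t_1 \le 0$ and unpacking $f \preceq tg$ in the same notation yields precisely the same system of inequalities, establishing the equivalence. The main technical obstacle is the degree and sign bookkeeping in (ii) and (iii), which requires some care because $\deg f$ and $\deg g$ can independently equal either $\deg h$ or $\deg h+1$; however, since the relevant degrees differ by at most one, the verification reduces to a short finite case analysis.
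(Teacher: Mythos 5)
The paper does not prove this lemma; it simply cites Wagner's survey \cite[Section 3]{wagner1992total} as a standard reference. So there is no internal proof to compare against, and the evaluation below is purely of the correctness of your sketch.

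Parts (i) and (iv) are fine. For (i) the key observation is exactly the one you make: the relation $\preceq$ as defined in the paper is a condition on root multisets only, so it is invariant under any nonzero rescaling, and the positive-leading-coefficient hypothesis plays no role in this item. For (iv) your translation of the inequality chains is correct; the extra root at $0$ contributed by the factor $t$ is automatically the largest root of $tg$ since all other roots are nonpositive, and the degree bookkeeping ($\deg(tg)\in\{\deg f,\deg f+1\}$ iff $\deg g\in\{\deg f-1,\deg f\}$ iff $\deg f\in\{\deg g,\deg g+1\}$) closes without a case split.

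For (ii) and (iii) the sign-alternation plus intermediate-value-theorem argument is the standard elementary route and your sketch captures it, but one phrase is slightly off and one genuine subtlety is only gestured at. The claim that under $h\preceq f$ "exactly $i$ roots of $f$ lie in $[s_i,\infty)$" can fail when a root of $f$ coincides with $s_i$; the correct and usable statement is simply the sign condition $(-1)^i f(s_i)\ge 0$, which is what you then use. More substantively, the "requires some care" you mention at the end is real: if $f+g$ vanishes at consecutive roots $s_{i+1}$ and $s_i$ of $h$, a naive IVT count can attribute a single root of $f+g$ to two adjacent intervals, and the positioning of any remaining root relative to $s_k$ when $\deg(f+g)=k+1$ also needs to be pinned down. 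The standard way to dispose of all of this in one stroke is a perturbation argument: prove (ii) and (iii) first under the generic assumption that the roots of $h$ are simple and disjoint from those of $f$ and $g$ (where your argument works verbatim and yields strict interlacing), and then obtain the general case by a limiting argument using the continuity of roots, since $\preceq$ is a closed condition. Adding a sentence to this effect would make the proof complete; as written it is a correct outline with the one delicate step acknowledged but not resolved.
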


A sequence of polynomials $(f_1,\ldots, f_m)$ is called an \textbf{interlacing sequence} if $f_i\preceq f_j$ whenever $i\leq j$. The following results are due to Br\"and\'en~\cite{Petter2,Brandenunimodal}.
\begin{lem}[{\cite[Lemma 8.3]{Brandenunimodal}}]\label{lem:commoninterlacer}
If $(f_1,f_2,\ldots, f_m)$ and $(g_1,g_2,\ldots, g_m)$ are two sequences of interlacing polynomials with positive leading coefficients then
\[
f_1g_m+f_2g_{m-1}+\cdots + f_mg_1
\]
is real-rooted.
\end{lem}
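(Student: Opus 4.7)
My plan is to induct on $m$. The base case $m=1$ is immediate because $f_1g_1$ is real-rooted as a product of two real-rooted polynomials. For the inductive step I would split
\[
F_m \;=\; f_mg_1 + \sum_{i=1}^{m-1} f_ig_{m+1-i} \;=\; f_1g_m + \sum_{i=2}^{m} f_ig_{m+1-i},
\]
and observe that, after relabeling, each inner sum is of the form covered by the statement with two interlacing sequences of length $m-1$: one uses $(f_1,\dots,f_{m-1})$ with $(g_2,\dots,g_m)$, the other $(f_2,\dots,f_m)$ with $(g_1,\dots,g_{m-1})$. The inductive hypothesis then guarantees that both inner sums are real-rooted, and the remaining task is to glue on the boundary term $f_mg_1$ (respectively $f_1g_m$) while preserving real-rootedness. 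By parts (ii)--(iii) of Lemma~\ref{lem:basicsinterlace} this gluing is legitimate as soon as the inner sum and the boundary summand share a common interlacer.

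To produce such a common interlacer I would strengthen the inductive claim. Alongside real-rootedness of $F_m$, I would carry through the stronger assertion that the partial sums
\[
S_k \;:=\; \sum_{i=1}^k f_ig_{m+1-i}, \qquad k=0,1,\dots,m,
\]
form an interlacing sequence $S_0 \preceq S_1 \preceq \cdots \preceq S_m$. If this is known inductively, then at each step $S_{k-1}\preceq S_k$, together with parts (i) and (iv) of Lemma~\ref{lem:basicsinterlace}, supplies the common interlacer needed to append the new summand $f_kg_{m+1-k}=S_k-S_{k-1}$, so real-rootedness propagates from $S_{k-1}$ to $S_k$ and ultimately to $S_m=F_m$.

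The main obstacle is establishing this strengthened interlacing assertion, since it requires showing that replacing one index of $f$ or $g$ by the next moves the corresponding partial sum monotonically in the interlacing order, using only the interlacings $f_i\preceq f_{i+1}$ and $g_j\preceq g_{j+1}$ and the basic rules recorded in Lemma~\ref{lem:basicsinterlace}. If this bookkeeping becomes unwieldy, a cleaner alternative route is to invoke the Hermite--Biehler equivalence between interlacing sequences with positive leading coefficients and real stability of the associated bivariate polynomials $P(t,s)=\sum_i f_i(t)s^i$ and $Q(t,s)=\sum_j g_j(t)s^j$: their product is real stable, and $F_m$ is recovered as the coefficient of $s^{m+1}$ in $PQ$, which must then be a real-rooted univariate polynomial in $t$. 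This second route avoids the index bookkeeping entirely but imports machinery beyond what the preliminary section of the paper has set up.
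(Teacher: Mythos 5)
The paper does not prove this lemma---it is quoted from Br\"and\'en's survey~\cite[Lemma 8.3]{Brandenunimodal}---so there is no internal argument to compare against; I evaluate your proposal on its own merits.

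Your main route rests on a strengthened inductive claim that the partial sums $S_k=\sum_{i=1}^k f_ig_{m+1-i}$ form an interlacing sequence, and this claim is false. Take $m=2$, $f_1=t+3$, $f_2=t+1$, $g_1=t+2$, $g_2=t$; both $(f_1,f_2)$ and $(g_1,g_2)$ are interlacing with positive leading coefficients. Then $S_1=f_1g_2=t(t+3)$ has roots $\{-3,0\}$, while $S_2=S_1+f_2g_1=2(t^2+3t+1)$ has roots $\frac{-3\pm\sqrt{5}}{2}\approx\{-2.62,-0.38\}$. The largest root of $S_1$ is $0$, which strictly exceeds the largest root of $S_2$, so $S_1\not\preceq S_2$, and one checks $S_2\not\preceq S_1$ as well. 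Thus the monotonicity you hope to propagate does not hold: the obstacle you flag at the end of your plan is not bookkeeping but a genuine failure. The underlying difficulty is that interlacing is not transitive, so the relations $f_i\preceq f_{i+1}$ and $g_j\preceq g_{j+1}$ do not control the partial sums in the way the plan requires.

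Your alternative route via stability is reasonable in spirit but also incorrect as stated: with the paper's convention $f_1\preceq f_2\preceq\cdots\preceq f_m$, the bivariate polynomial $P(t,s)=\sum_i f_i(t)s^i$ is generally not real stable---the Hermite--Biehler correspondence requires the reverse weighting, e.g.\ $P(t,s)=\sum_i f_i(t)s^{m-i}$, in which case $PQ$ is stable and $F_m$ appears as its coefficient of $s^{m-1}$ (not $s^{m+1}$). Even with that fix, the route imports the multivariate Hermite--Biehler theorem, which goes beyond what the paper's preliminaries set up. In short, the proposal sketches two strategies but completes neither, and the central lemma about partial sums on which the first strategy rests is false.
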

In particular, if $(f_1,f_2,\ldots, f_m)$ is an interlacing sequence of polynomials with positive leading coefficients then
\[
c_1f_1+c_2f_2+\cdots + c_mf_m
\]
is real-rooted for every choice of $c_1,\ldots,c_m\geq 0$. 
\begin{lem}[{\cite[Lemma 2.3]{Petter2}}]\label{lem:interlacingends}
Let $f_1,\ldots, f_m$ be polynomials such that $f_i\preceq f_{i+1}$ for all $1\leq i\leq m-1$ and such that $f_1\preceq f_m$. Then $f_i\preceq f_j$ for all $1\leq i\leq j\leq m$, that is, $(f_1,\ldots,f_m)$ is an interlacing sequence.
\end{lem}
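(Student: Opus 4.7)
The plan is to verify directly the two root-interleaving inequalities that define $f_i\preceq f_j$, routing the argument through the two endpoints of the chain. First, each relation $f_a\preceq f_{a+1}$ forces $\deg f_{a+1}\in\{\deg f_a,\deg f_a+1\}$, while the extra hypothesis $f_1\preceq f_m$ forces $\deg f_m-\deg f_1\in\{0,1\}$. Hence the degree sequence is non-decreasing with total increase at most one, and in particular $\deg f_j-\deg f_i\in\{0,1\}$ for every $i<j$, so the target relation is at least degree-compatible.

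Let $s_\ell^{(a)}$ denote the $\ell$-th largest root of $f_a$. A short case-split on whether a step preserves or increases the degree shows that $f_a\preceq f_{a+1}$ is equivalent to the pair of inequalities
\begin{equation*}
s_\ell^{(a)}\le s_\ell^{(a+1)}\ \text{for}\ \ell=1,\dots,\deg f_a,\quad\text{and}\quad s_{\ell+1}^{(a+1)}\le s_\ell^{(a)}\ \text{for}\ \ell=1,\dots,\deg f_{a+1}-1.
\end{equation*}
Iterating the first family along the sub-chain $f_i,\dots,f_j$ gives $s_\ell^{(i)}\le s_\ell^{(j)}$ for $\ell=1,\dots,\deg f_i$, which is one of the two conditions for $f_i\preceq f_j$.

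The other condition, $s_{\ell+1}^{(j)}\le s_\ell^{(i)}$ for $\ell=1,\dots,\deg f_j-1$, is where the global hypothesis $f_1\preceq f_m$ becomes indispensable. I would chain three links: iterated monotonicity along $j,\dots,m$ gives $s_{\ell+1}^{(j)}\le s_{\ell+1}^{(m)}$; the interleaving half of $f_1\preceq f_m$ gives $s_{\ell+1}^{(m)}\le s_\ell^{(1)}$; and iterated monotonicity along $1,\dots,i$ gives $s_\ell^{(1)}\le s_\ell^{(i)}$. The main obstacle is the index bookkeeping, since the third link requires $\ell\le\deg f_1$ and the second requires $\ell+1\le\deg f_m$. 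Both follow from the total-degree-jump bound established at the outset, because $\deg f_j-1\le\deg f_1\le\deg f_m$. Concatenating the three inequalities yields $s_{\ell+1}^{(j)}\le s_\ell^{(i)}$ throughout the required range, completing the verification that $f_i\preceq f_j$ and hence that $(f_1,\dots,f_m)$ is an interlacing sequence.
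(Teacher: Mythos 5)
The paper itself does not prove this lemma; it is quoted verbatim from Br\"and\'en's survey~\cite{Petter2} and used as a black box, so there is no ``paper's own proof'' to compare against. Your direct root-chasing verification is correct, and it is essentially the standard argument one would give. The decomposition of $f_a\preceq f_{a+1}$ into the two families $s_\ell^{(a)}\le s_\ell^{(a+1)}$ (monotonicity of the $\ell$-th largest roots) and $s_{\ell+1}^{(a+1)}\le s_\ell^{(a)}$ (the interleaving gaps) is the right way to isolate which inequalities transport along the chain and which do not; the first family chains freely, while the second does not, and this is precisely why the hypothesis $f_1\preceq f_m$ is needed. Your detour $s_{\ell+1}^{(j)}\le s_{\ell+1}^{(m)}\le s_\ell^{(1)}\le s_\ell^{(i)}$ is exactly the mechanism that supplies the missing gap inequality, and your degree bookkeeping ($\deg f_j - 1\le\deg f_1$, forced by the total jump being at most one) correctly justifies that every index used in the three links is in range. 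One small point worth stating explicitly, since the paper's conventions allow $0\preceq f$ and $f\preceq 0$: your argument implicitly assumes all $f_a$ are nonzero real-rooted polynomials, which is the setting of Br\"and\'en's lemma and of every application in this paper, but is not literally forced by the typographical statement. With that caveat noted, the proof is complete and sound.
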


In~\cite{branden2018symmetric} Br\"and\'en and Solus studied real-rooted and interlacing symmetric decompositions. A symmetric decomposition $(p,q)$ is called \textbf{real-rooted} if $p$ and $q$ are both real-rooted. If moreover $q\preceq p$ then the symmetric decomposition is called \textbf{interlacing}. The following theorem by Br\"and\'en and Solus~\cite{branden2018symmetric} characterizes interlacing symmetric decompositions.
\begin{thm}[{\cite[Theorem 2.7]{branden2018symmetric}}]\label{thm:solus2}
Let $h(t)$ be a polynomial of degree at most $d$ and let $(p,q)$ be its symmetric decomposition. If $p(t)$ and $q(t)$ have only nonnegative coefficients, then the following are equivalent.
\begin{itemize}
\item[(i)] $q\preceq p$.
\item[(ii)] $p\preceq h$.
\item[(iii)] $q\preceq h$.
\item[(iv)] $\mathcal I _d (h)\preceq h$.
\end{itemize}
\end{thm}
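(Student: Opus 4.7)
The plan is to exploit the two basic identities $h = p + tq$ and $\mathcal{I}_d(h) = p + q$ (the former is the definition, the latter follows by adding the two displayed formulas in Lemma~\ref{lem:solus}). Under the standing assumption that $p$ and $q$ have only nonnegative coefficients, every real root of $p$ or $q$ is nonpositive; in particular, whenever $q$ is real-rooted one has the automatic interlacing $q \preceq tq$, since the roots of $tq$ are those of $q$ together with $0$. This free interlacing, together with Lemma~\ref{lem:basicsinterlace}, does most of the work.

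To prove $(i)\Rightarrow (ii),(iii),(iv)$, assume $q\preceq p$. Then $p$ and $q$ are both real-rooted with nonpositive roots, so Lemma~\ref{lem:basicsinterlace}(iv) turns $(i)$ into the equivalent form $p \preceq tq$. Combining $p\preceq tq$ with the trivial $p\preceq p$ via Lemma~\ref{lem:basicsinterlace}(ii) yields $p \preceq p+tq = h$, which is $(ii)$. Combining $q\preceq p$ with $q\preceq tq$ through the same rule yields $(iii)$. Finally, applying Lemma~\ref{lem:basicsinterlace}(iii) to $(ii)$ and $(iii)$ gives $p+q \preceq h$, which is $(iv)$.

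For the converses, the key tool is sign-analysis at roots. A prototype is: if $f$ is real-rooted with nonnegative coefficients and $f\preceq f+g$ for some nonnegative $g$ with $f+g$ real-rooted, then evaluating $f+g$ at the roots $s$ of $f$ gives $(f+g)(s)=g(s)$; the interlacing forces $(f+g)(s)$ to alternate in sign along the roots of $f$, and hence $g(s)$ does so as well, producing a root of $g$ between each consecutive pair of roots of $f$, so $f\preceq g$. Applied with $f = p$ and $g = tq$, this directly converts $(ii)$ into $p\preceq tq$, which is $(i)$ by Lemma~\ref{lem:basicsinterlace}(iv). The implication $(iii)\Rightarrow(i)$ is dual: at roots $s$ of $q$ one has $h(s) = p(s)$, and the interlacing $q\preceq h$ transfers to sign alternation of $p$ at the roots of $q$, which (together with nonnegativity and degree bookkeeping) yields $q \preceq p$. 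For $(iv)\Rightarrow(i)$, evaluate at the roots $r$ of $\mathcal{I}_d(h)=p+q$: there $p(r) = -q(r)$ and $h(r) = (r-1)q(r)$; since $r-1 < 0$, the alternation of $h(r)$ imposed by $\mathcal{I}_d(h)\preceq h$ forces simultaneous alternation of $p$ and $q$ at the points $r$, so both have a root in each gap $(r_{i+1},r_i)$. The step I expect to be the main obstacle is precisely the final bookkeeping in $(iv)\Rightarrow(i)$: the analysis shows both $p$ and $q$ interlace $p+q$, and one must use the relation $h(r)=(r-1)q(r)$ to determine which of the two roots lies to the left in each gap, upgrading the common interlacing with $p+q$ to the sharper statement $q\preceq p$.
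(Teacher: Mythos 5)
The paper itself does not prove this statement; it cites it as Theorem~2.7 of Br\"and\'en--Solus, so there is no in-paper argument to compare against. Your blind proof is correct and, as far as I can tell, close in spirit to the original: the key identities $h = p + tq$ and $\mathcal I_d(h) = p + q$ together with Lemma~\ref{lem:basicsinterlace} and sign-alternation at roots are exactly the right tools. Your forward chain $(i)\Rightarrow(ii),(iii),(iv)$ is clean and complete. For the converses, your ``prototype'' ($f$ real-rooted nonnegative, $f \preceq f+g$ with $g$ nonnegative, $f+g$ real-rooted, implies $f \preceq g$) does work, but only because you are additionally using that $g$ has nonnegative coefficients: the sign analysis at the roots of $f$ produces a root of $g$ in each gap between consecutive roots of $f$, and it is the sign $g(s_1)<0$ combined with $g$ having positive leading coefficient that supplies the extra root of $g$ to the right of $s_1$, without which the degree count and the direction of the interlacing would not come out to $f \preceq g$. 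That point is worth making explicit since it is where the nonnegativity hypothesis on $q$ (hence on $tq$) enters. Your treatment of $(iv)\Rightarrow(i)$ is the most delicate and you have correctly identified the obstacle; the resolution you sketch does work: at roots $r$ of $p+q$ you get $p(r)=-q(r)$ and $h(r)=(r-1)q(r)$, and in each gap $(r_{i+1},r_i)$ the constancy of the sign of $p+q$ forces the root of $p$ there to lie to the left of the root of $q$ (if the order were reversed, $p$ and $q$ would share a sign opposite to that of $p+q$ on a subinterval, a contradiction). The only thing missing for a fully rigorous write-up is the standard bookkeeping for degenerate cases: repeated roots, shared roots of $p$, $q$, $h$, or $\mathcal I_d(h)$, and degree drops (e.g., $p_0=0$ or $q=0$). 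These are routinely handled by perturbation or by the convention $0\preceq f\preceq 0$ already adopted in the paper, but a complete proof should say so.
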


\subsection{Two operators}
A fundamental fact about generating function is that for every sequence $\{a_n\}_{n\geq 0}$ that eventually agrees with a polynomial of degree at most $d$ there is a unique polynomial $h(t)$ such that
\[
\sum _{n\geq 0}a_{n}t^n = \frac{h(t)}{(1-t)^{d+1}} \, .
\]
Vice versa, via this equation every polynomial $h(t)$ defines such a sequence (see, e.g.,~\cite[Corollary 4.3.1]{EC1}). 

If $\{a_n\}_{n\geq 0}$ that eventually agrees with a polynomial of degree at most $d$ then the same holds for $\{a_{rn}\}_{n\geq 0}$. Thus there is a unique polynomial $\Urd{r}{d+1} h (t)$ such that
\[
\sum _{n\geq 0}a_{rn}t^n = \frac{\Urd{r}{d+1}  h (t)}{(1-t)^{d+1}}.
\] 
We observe that the map $h(t)\mapsto \Urd{r}{d+1}  h (t)$ defines a linear operator $\R[t]\rightarrow \R[t]$. Moreover, $h(t)$ has degree at most $d$ if and only if $\{a_n\}_{n\geq 0}$ agrees with a polynomial of degree at most $d$ for all $n\geq 0$. In particular, in this case also $\{a_{rn}\}_{n\geq 0}$ is given by a polynomial of degree at most $d$, and therefore also $\Urd{r}{d+1}  h (t)$ has degree at most $d$. In particular, the symmetric decomposition of $\Urd{r}{d+1}  h (t)$ exists.

To every Laurent series $f(t)=\sum _{n=-\infty}^\infty a_nt^n$ and any natural number $r\geq 1$ there are uniquely defined Laurent series $f_0,f_1,\ldots,f_{r-1}$ such that
\[
f(t) \ = \ f_0(t^r)+rf_1(t^r)+\cdots + t^{r-1}f_{r-1}(t^r) \, .
\]
For all $0\leq i\leq r-1$ we define $f\hri{r}{i}=f_i$. Then $\hri{r}{i}$ defines a linear operator on Laurent series. This operator extends the definition given in~\cite{jochemko2018} from formal power series to Laurent series. (Also compare~\cite{GilRobins} where these operators have been studied in a more general setup.) 

The following result relates the two operators $\Urd{r}{d+1} $ and $\hri{r}{0}$.
\begin{lem}[{\cite{Beck2010,BrentiWelker}}]\label{lem:dilatedh}
For integers $r\geq 1$ and $d\geq 0$ and any polynomial $h(t)$
\[
\Urd{r}{d+1}  h (t) = (h(t)(1+t+\cdots +t^{r-1})^{d+1})\hri{r}{0} \, .
\]
\end{lem}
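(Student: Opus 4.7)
The plan is to exploit the fact that the operator $\hri{r}{0}$ is a module homomorphism over the ring of Laurent series in $t^r$: whenever $F(t)$ is a Laurent series and $G(u)$ is a (Laurent) series in a single variable $u$, one has $(F(t)\cdot G(t^r))\hri{r}{0}(t) = F\hri{r}{0}(t)\cdot G(t)$. This follows directly from the definitions, since in a monomial-by-monomial expansion of the product only those terms of $F$ whose exponent is already divisible by $r$ can contribute to the part of the product with exponent divisible by $r$.

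The first step I would carry out is the algebraic identity $1 - t^r = (1-t)(1+t+\cdots+t^{r-1})$, which yields
\begin{equation*}
\sum_{n \geq 0} a_n t^n \ = \ \frac{h(t)}{(1-t)^{d+1}} \ = \ \frac{h(t)(1+t+\cdots+t^{r-1})^{d+1}}{(1-t^r)^{d+1}}.
\end{equation*}
The point of this rewriting is that the new denominator is now a power series in $t^r$ alone, which is exactly the situation in which the module-homomorphism property can be applied.

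Next, I would apply $\hri{r}{0}$ to both sides. On the left-hand side, by definition of $\hri{r}{0}$ and of $\Urd{r}{d+1}$ one gets $\sum_{n\geq 0} a_{rn}t^n = \Urd{r}{d+1}h(t)/(1-t)^{d+1}$. On the right-hand side, applying the module-homomorphism property with $F(t)=h(t)(1+t+\cdots+t^{r-1})^{d+1}$ and $G(u)=1/(1-u)^{d+1}$ produces
\begin{equation*}
\frac{\bigl(h(t)(1+t+\cdots+t^{r-1})^{d+1}\bigr)\hri{r}{0}}{(1-t)^{d+1}}.
\end{equation*}
Equating the two sides and clearing the common factor $(1-t)^{d+1}$ gives the claimed formula.

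There is no serious obstacle: the only real content is verifying the module-homomorphism property, and this reduces to a routine coefficient comparison on formal (Laurent) series. Once that is established, the argument is essentially an exercise in bookkeeping, and the asserted identity drops out by multiplying top and bottom of the generating function by the right cyclotomic-like factor to make the denominator depend only on $t^r$ before applying $\hri{r}{0}$.
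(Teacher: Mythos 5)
Your proof is correct. The paper states Lemma~\ref{lem:dilatedh} as a cited result from Beck--Stapledon and Brenti--Welker and does not reprove it, so there is no in-paper proof to compare against; the argument you give --- rewrite the denominator as a power of $1-t^r$ via $1-t^r=(1-t)(1+t+\cdots+t^{r-1})$, note that $\hri{r}{0}$ is linear over series in $t^r$, and apply $\hri{r}{0}$ to both sides --- is essentially the standard argument from those sources, and your verification of the ``module-homomorphism'' property $(F(t)\,G(t^r))\hri{r}{0}=F\hri{r}{0}(t)\,G(t)$ by coefficient comparison is exactly the right justification. One small bookkeeping point worth making explicit: the identity $\bigl(\sum_n a_n t^n\bigr)\hri{r}{0}=\sum_n a_{rn}t^n$, which you invoke on the left-hand side, is immediate from the definition $f(t)=f_0(t^r)+tf_1(t^r)+\cdots+t^{r-1}f_{r-1}(t^r)$ with $f\hri{r}{i}:=f_i$, and combining it with the definition of $\Urd{r}{d+1}$ gives the left-hand side in the form you claim. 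With that spelled out the proof is complete.
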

A key element in~\cite{jochemko2018} was the following set of polynomials: For $r\geq 1$, $d\geq 0$  and $0\leq i\leq r-1$ let
\[
a_d \hri{r}{i} (t)=((1+t+\cdots + t^{r-1})^d)\hri{r}{i} \, .
\]
The following result was proved in~\cite{jochemko2018} (also compare Fisk~\cite[Example 3.76]{fisk2006polynomials}).
\begin{prop}[{\cite[Proposition 3.4]{jochemko2018}}]\label{prop:ans}
For all integers $r,d\geq 1$ the polynomials
\[
\left(a_d\hri{r}{r-1}, a_d\hri{r}{r-2},\ldots, a_d \hri{r}{0}\right)
\]
form an interlacing sequence.
\end{prop}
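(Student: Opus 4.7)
The plan is to induct on $d$. The base case $d=1$ is immediate since $a_1(t)=1+t+\cdots+t^{r-1}$ gives $a_1\hri{r}{i}(t)=1$ for every $0\le i\le r-1$.

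For the inductive step, I factor $a_d(t)=(1+t+\cdots+t^{r-1})\,a_{d-1}(t)=\sum_{k=0}^{r-1}t^k a_{d-1}(t)$. A direct check from the definition of the sectioning operator shows that $(t^k f)\hri{r}{j}(t)$ equals $f\hri{r}{j-k}(t)$ when $k\le j$ and $t\cdot f\hri{r}{r+j-k}(t)$ when $k>j$, which yields the recurrence
\[
a_d\hri{r}{j}(t)\;=\;\sum_{i=0}^{j}b_i(t)\;+\;t\sum_{i=j+1}^{r-1}b_i(t),\qquad b_i:=a_{d-1}\hri{r}{i}.
\]
For any nonnegative scalars $c_0,\ldots,c_{r-1}$, interchanging the double summation gives
\[
\sum_{j=0}^{r-1}c_j\,a_d\hri{r}{j}(t)\;=\;\sum_{i=0}^{r-1}\phi_i(t)\,b_i(t),\qquad\phi_i(t):=\sum_{k\ge i}c_k\,+\,t\sum_{k<i}c_k.
\]
Each $\phi_i$ has degree at most $1$ with nonnegative coefficients, and a short computation (reducing to the trivial inequality $c_i(A_i+B_i)\ge 0$ with $A_i:=\sum_{k\ge i}c_k$ and $B_i:=\sum_{k<i}c_k$) confirms that $\phi_i\preceq\phi_{i+1}$ for all $i$, so $(\phi_0,\phi_1,\ldots,\phi_{r-1})$ is an interlacing sequence.

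Combining the interlacing sequence $(\phi_0,\ldots,\phi_{r-1})$ with the inductive hypothesis $(b_{r-1},\ldots,b_0)$ via Lemma~\ref{lem:commoninterlacer} (with the reindexing $f_k:=\phi_{k-1}$, $g_k:=b_{r-k}$, so that $f_kg_{r+1-k}=\phi_{k-1}b_{k-1}$) shows that $\sum_j c_j\,a_d\hri{r}{j}(t)$ is real-rooted for every choice of nonnegative $c_j$. In particular each $a_d\hri{r}{j}$ is real-rooted and the family is pairwise compatible, so each pair interlaces in one of the two possible directions.

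To promote pairwise compatibility to the ordered interlacing of $(a_d\hri{r}{r-1},\ldots,a_d\hri{r}{0})$, I invoke Lemma~\ref{lem:interlacingends}, which reduces the task to checking the adjacent interlacings $a_d\hri{r}{j+1}\preceq a_d\hri{r}{j}$ and the single end interlacing $a_d\hri{r}{r-1}\preceq a_d\hri{r}{0}$. The candidate direction is consistent with the formula $\deg a_d\hri{r}{j}=\lfloor(d(r-1)-j)/r\rfloor$, which is weakly decreasing in $j$; this forces the direction whenever degrees strictly decrease. For the remaining (equal-degree) pairs, the direction is pinned down by considering the parametric family $H(s):=a_d\hri{r}{j+1}(t)+s(t-1)b_{j+1}(t)$ interpolating from $a_d\hri{r}{j+1}$ at $s=0$ to $a_d\hri{r}{j}$ at $s=1$: one checks that $H(s)=\sum_i\alpha_i^{(s)}(t)b_i(t)$ where $\alpha_i^{(s)}$ is $1$ for $i\le j$, equals $(1-s)+st$ for $i=j+1$, and equals $t$ for $i\ge j+2$, so $(\alpha_0^{(s)},\ldots,\alpha_{r-1}^{(s)})$ is an interlacing sequence for every $s\in[0,1]$; Lemma~\ref{lem:commoninterlacer} then supplies real-rootedness along the whole family and a continuity-of-roots argument determines the direction. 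The main obstacle is this final direction-fixing step; controlling the roots along the equal-degree deformation requires some care, but once it is established the proposition follows.
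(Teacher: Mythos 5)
Your recurrence
\[
a_d\hri{r}{j}=\sum_{i=0}^{j}b_i+t\sum_{i=j+1}^{r-1}b_i,\qquad b_i:=a_{d-1}\hri{r}{i},
\]
is exactly the one underlying the paper's argument (it is equation~\eqref{eq:recursion} in the proof of Lemma~\ref{lem:ans2}, specialized to $h=1$), and your verification that every nonnegative combination $\sum_j c_j a_d\hri{r}{j}=\sum_i\phi_i b_i$ is real-rooted via Lemma~\ref{lem:commoninterlacer} is correct: the $\phi_i$ are linear, and $\phi_i\preceq\phi_{i+1}$ reduces to $A_iB_{i+1}\geq A_{i+1}B_i$, i.e.\ $c_i(A_i+B_i)\geq 0$, as you say. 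This yields real-rootedness of each $a_d\hri{r}{j}$ and pairwise compatibility, hence that each pair interlaces in \emph{some} direction.

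The genuine gap is in fixing the direction. Real-rootedness of the convex combination $H(s)=(1-s)a_d\hri{r}{j+1}+s\,a_d\hri{r}{j}$ for all $s\in[0,1]$ is symmetric in the two polynomials and therefore cannot by itself distinguish $a_d\hri{r}{j+1}\preceq a_d\hri{r}{j}$ from the opposite order when degrees agree; the ``continuity-of-roots'' step you appeal to would need an additional monotonicity input (e.g.\ a sign condition on the Wronskian $a_d\hri{r}{j+1}{}'\,a_d\hri{r}{j}-a_d\hri{r}{j+1}\,a_d\hri{r}{j}{}'$, or a comparison against a common interlacer) that you do not supply, and you yourself flag this as the unresolved ``main obstacle.'' The paper sidesteps the whole issue: rather than deducing interlacing from real-rootedness of combinations, it feeds the recurrence directly into a standard interlacing-\emph{preserving} linear map (the cited Theorem~2.3 of Savage--Visontai, equivalently Proposition~2.2 of~\cite{jochemko2018}), which, after the reindexing $f_k:=b_{r-k}$ so that $a_d\hri{r}{r-m}=tf_1+\cdots+tf_{m-1}+f_m+\cdots+f_r$, outputs the \emph{ordered} interlacing sequence $(a_d\hri{r}{r-1},\ldots,a_d\hri{r}{0})$ in one step. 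Citing that result (or proving it) is what your argument needs to close the direction-fixing gap; the base case and the recurrence you derived are fine.
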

Equivalently, by Lemma~\ref{lem:basicsinterlace},
\[
\left(a_d \hri{r}{0},ta_d\hri{r}{r-1}, ta_d\hri{r}{r-2},\ldots, ta_d \hri{r}{0}\right)
\]
is an interlacing sequence of polynomials. With this definition, Lemma~\ref{lem:dilatedh}, can be reformulated as follows.
\begin{lem}[{\cite{jochemko2018}}]\label{lem:reform}
For integers $r,d\geq 1$ and any polynomial $h(t)$
\[
\Urd{r}{d+1}  h (t)=h\hri{r}{0}a_{d+1}\hri{r}{0}+h\hri{r}{1}ta_{d+1}\hri{r}{r-1}+\cdots +h\hri{r}{r-1}ta_{d+1}\hri{r}{1} \, .
\]
\end{lem}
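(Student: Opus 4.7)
The plan is to derive the identity by explicit expansion of a product of two $r$-dissected Laurent series, starting from Lemma~\ref{lem:dilatedh}. First I would invoke Lemma~\ref{lem:dilatedh} to write
\[
\Urd{r}{d+1}h(t) \ = \ \bigl(h(t)\cdot (1+t+\cdots+t^{r-1})^{d+1}\bigr)\hri{r}{0} \, .
\]
Every polynomial $f(t)$ admits the canonical decomposition $f(t)=\sum_{i=0}^{r-1}t^i f\hri{r}{i}(t^r)$. Applying this both to $h(t)$ and to $(1+t+\cdots+t^{r-1})^{d+1}$, and recalling that $a_{d+1}\hri{r}{j}=\bigl((1+t+\cdots+t^{r-1})^{d+1}\bigr)\hri{r}{j}$, I would then expand
\[
h(t)\cdot (1+t+\cdots+t^{r-1})^{d+1} \ = \ \sum_{i=0}^{r-1}\sum_{j=0}^{r-1} t^{i+j}\,h\hri{r}{i}(t^r)\,a_{d+1}\hri{r}{j}(t^r) \, .
\]

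The next step is to isolate the $\hri{r}{0}$-component of this double sum. Because each factor $h\hri{r}{i}(t^r)$ and $a_{d+1}\hri{r}{j}(t^r)$ has only exponents divisible by $r$, the exponent of $t$ in the $(i,j)$-summand is congruent to $i+j\pmod{r}$. For $0\leq i,j\leq r-1$, this is a multiple of $r$ exactly when $(i,j)=(0,0)$ or $i+j=r$. The first case contributes $h\hri{r}{0}(t)\,a_{d+1}\hri{r}{0}(t)$ after the implicit substitution $t^r\mapsto t$ in the $\hri{r}{0}$ operator. In the second case $j=r-i$ for some $1\leq i\leq r-1$, and the additional factor $t^r$ becomes $t$ under the same substitution, producing the summand $t\,h\hri{r}{i}(t)\,a_{d+1}\hri{r}{r-i}(t)$. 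Summing these contributions reproduces the stated formula verbatim.

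I do not anticipate any substantive obstacle: the proof reduces to a direct bookkeeping of how residues modulo $r$ combine under multiplication of two $r$-dissected series. The only subtlety is that pairs with $i+j=r$ contribute only a single factor of $t$ (rather than $t^r$) after extracting the $\hri{r}{0}$-component, which is precisely what produces the asymmetry between the leading summand and the remaining ones on the right-hand side of the identity.
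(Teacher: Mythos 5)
Your proof is correct. The paper states this lemma as a citation from \cite{jochemko2018} without reproducing a proof, but your argument is the direct and natural one: expand both factors in Lemma~\ref{lem:dilatedh} as $r$-dissections, multiply, and observe that the $\hri{r}{0}$-component picks out exactly the pairs $(i,j)$ with $i=j=0$ (no extra power of $t$) or $i+j=r$ (a factor $t^r$, which becomes $t$ after the substitution implicit in $\hri{r}{0}$). This is precisely the bookkeeping that produces the stated identity, and I see no gap.
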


\section{Symmetric decomposition of $U_r^{d+1} h (t)$}\label{sec:symmetricdecomp}
In this section we provide explicit formulas for the symmetric decomposition of $U_r ^{d+1}h(t)$.

We need the following slight generalization of Lemma~\ref{lem:extendedStapledon}. 
\begin{prop}\label{prop:decomp2}
Let $h(t)=\sum _{i=0}^s h_it^i$ be a polynomial of degree $s\leq d$, let $(p,q)$ be its symmetric decomposition and let $r \geq \ell=d+1-s$ be an integer. Then there are uniquely determined symmetric polynomials $v_r(t)=\sum_{i=0}^{d}v_{r,i}t^i$ and $w_r(t)=\sum_{i=0}^{r+s-\ell -1}w_{r,i}t^i$ satisfying $v_r(t)=t^{d}v_r(1/t)$ and $w_r(t)=t^{r+s-\ell -1}w_r(1/t)$ such that
\begin{equation}\label{eq:extendedStapledon}
(1+t+\cdots +t^{r-1})h(t)=v_r(t)+t^\ell w_r(t) \, ,
\end{equation}
namely
\[
v_{r,i} \ = \ h_0+\cdots + h_i - h_{d}-\cdots - h_{d-i+1} \, 
\]
and
\[
w_{r,i} \ = \ -h_{\ell -r}-h_{\ell +1 -r}-\cdots -h_{\ell + i-r} + h_s+h_{s-1}+\cdots + h_{s-i} \, ,
\]
where $h_i := 0$ whenever $i\not \in 
\{0,1,\ldots,s\}$. In particular, $v_r(t)=p(t)$ for all $r\geq \ell$.
\end{prop}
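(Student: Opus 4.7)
The plan is to reduce to Lemma~\ref{lem:extendedStapledon}, which handles the base case $r = \ell$. For general $r \ge \ell$, I would split the geometric factor as
\[
1+t+\cdots+t^{r-1} \;=\; (1+\cdots+t^{\ell-1}) \,+\, t^\ell(1+\cdots+t^{r-\ell-1}),
\]
multiply through by $h(t)$, and apply Lemma~\ref{lem:extendedStapledon} to the first summand to obtain
\[
(1+t+\cdots+t^{r-1}) h(t) \;=\; p(t) \,+\, t^\ell\bigl[q_\ell(t) + (1+t+\cdots+t^{r-\ell-1}) h(t)\bigr].
\]
This naturally suggests setting $v_r(t) := p(t)$ (so in particular $v_r = p$ as claimed) and $w_r(t) := q_\ell(t) + (1+\cdots+t^{r-\ell-1})h(t)$, which makes the required identity hold by construction. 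The coefficient formula for $v_r$ then just reproduces the one already recorded in Lemma~\ref{lem:extendedStapledon}.

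Next, I would verify the stated explicit formula for $w_{r,i}$. Combining $q_{\ell,i} = -h_0 - \cdots - h_i + h_s + \cdots + h_{s-i}$ from Lemma~\ref{lem:extendedStapledon} with the elementary coefficient expansion $[t^i]\bigl((1+\cdots+t^{r-\ell-1})h(t)\bigr) = h_{\max(0,\,i-r+\ell+1)} + \cdots + h_i$, a short case split on whether $i \le r-\ell-1$ or $i \ge r-\ell$ followed by telescoping yields
\[
w_{r,i} \;=\; -(h_{\ell-r} + h_{\ell+1-r} + \cdots + h_{\ell+i-r}) \,+\, (h_s + h_{s-1} + \cdots + h_{s-i}),
\]
where the convention $h_k = 0$ for $k \notin \{0,\ldots,s\}$ absorbs the discrepancy between the two cases.

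The symmetry $v_r(t) = t^d v_r(1/t)$ is inherited from $v_r = p$, so it remains to verify $w_r(t) = t^N w_r(1/t)$ for $N = r+s-\ell-1$. I would compute $w_{r,N-i}$ directly from the explicit formula, invoke the identities $\ell + N - r = s-1$ and $s - N = \ell+1-r$ to rewrite its two sums in comparable form, and then subtract from $w_{r,i}$; the four resulting $h$-sums pair up and cancel once one accounts for which indices lie outside $\{0,\ldots,s\}$. Finally, uniqueness follows from a standard argument: any difference of two valid decompositions satisfies $v - v' = t^\ell(w'-w)$, where the left-hand side is symmetric about $d/2$ with degree at most $d$ and the right-hand side has support contained in $\{\ell,\ldots,r+s-1\}$ and is symmetric about $(r+s-1)/2$. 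When $r > \ell$ these two symmetries have different centers and force both sides to vanish; when $r = \ell$ uniqueness is precisely the content of Lemma~\ref{lem:extendedStapledon}.

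The main obstacle is the bookkeeping in the symmetry verification for $w_r$: the telescoping sums involve index ranges that can straddle the endpoints $0$ and $s$, so one has to apply the $h_k = 0$ convention consistently across the relevant sub-cases. Each individual case, however, reduces to a routine telescoping calculation.
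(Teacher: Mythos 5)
Your proposal is correct but routes the argument differently from the paper. The paper's own proof is a direct coefficient verification: it writes $f_i = h_i + h_{i-1} + \cdots + h_{i-r+1}$ for the coefficients of $(1+\cdots+t^{r-1})h(t)$, then plugs the stated formulas for $v_{r,i}$ and $w_{r,i}$ into $v_{r,i} + w_{r,i-\ell} = f_i$ (and $v_{r,i} = f_i$ for $i < \ell$), checks the two symmetry relations by direct cancellation, and finally observes $v_r = p$ by comparing with Lemma~\ref{lem:extendedStapledon}. You instead split $1+\cdots+t^{r-1} = (1+\cdots+t^{\ell-1}) + t^\ell(1+\cdots+t^{r-\ell-1})$ and apply Lemma~\ref{lem:extendedStapledon} to the first block, which hands you $v_r = p$ structurally and reduces the problem to computing $w_r = q_\ell + (1+\cdots+t^{r-\ell-1})h$. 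This is a clean reduction that makes the conclusion $v_r = p$ come out "for free," and the subsequent coefficient and symmetry bookkeeping for $w_r$ is of comparable length to the paper's. Both approaches are valid.

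Two small inaccuracies in your uniqueness sketch: the right-hand side $t^\ell(w'-w)$ is symmetric about $(\ell + r + s - 1)/2$, not $(r+s-1)/2$ (you are off by $\ell/2$ in the recentering after multiplying by $t^\ell$); and the two centers $d/2 = (\ell+s-1)/2$ and $(\ell + r + s - 1)/2$ differ by $r/2$, which is positive for every $r \ge 1$, so the "different centers forces vanishing" argument applies uniformly for all $r \ge \ell$, not only $r > \ell$. Neither slip affects the conclusion; a common polynomial satisfying $P(t) = t^d P(1/t)$ and $P(t) = t^{d+r} P(1/t)$ with $r \ge 1$ must be zero, so $P = Q = 0$.
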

\begin{proof}
The argument goes along the lines of the proof of Lemma~\ref{lem:extendedStapledon} given in~\cite{stapledon2009inequalities}. Let $f(t)=\sum _i f_i t^i=(1+t+\cdots + t^{r-1})h(t)$. Then $f(t)$ is a polynomial of degree $r+s-1$ with
\[
f_i = h_i+h_{i-1}+\cdots +h_{i-r+1} \, ,
\]
where $h_i := 0$ whenever $i\not \in 
\{0,1,\ldots,s\}$. Since $d+1=\ell +s$, for all $i\geq \ell$ we obtain
\begin{eqnarray*}
v_{r,i}+w_{r,i-\ell}&=&h_0+\cdots + h_i - h_{d}-\cdots - h_{d-i+1}\\&&-h_{\ell -r}-h_{\ell +1 -r}-\cdots -h_{i-r} + h_s+h_{s-1}+\cdots + h_{s-i+\ell}\\
&=&h_0+\cdots + h_i - h_{d}-\cdots - h_{s+\ell -i}\\&&-h_{\ell -r}-h_{\ell +1 -r}-\cdots -h_{i-r} + h_s+h_{s-1}+\cdots + h_{s-i+\ell}\\
 &=&h_{i-r+1}+\cdots +h_i\\
 &=&f_i \, ,
\end{eqnarray*}
and
\[
v_{r,i}=h_0+\cdots +h_i = f_i
\]
for $i<\ell$. Thus, Equation~\eqref{eq:extendedStapledon} is satisfied. Moreover, by taking $v_{r,i}$ and $w_{r,i}$ as defined above,
\begin{eqnarray*}
v_{r,i}-v_{r,d-i}&=&h_0+\cdots +h_i -h_d-\cdots -h_{d-i+1}-h_0-\cdots -h_{d-i}+h_{d}+\cdots +h_{i+1}\\
&=&0\, 
\end{eqnarray*}
and
\begin{eqnarray*}
w_{r,i}-w_{r,r+s-\ell -1 -i}&=&-h_{\ell-r}-h_{\ell -r+1}-\cdots - h_{\ell -r+i}+h_s+h_{s-1}+\cdots +h_{s-i}\\
&&+h_{\ell -r}+h_{\ell -r+1}+\cdots + h_{s-1-i}-h_s-h_{s-1}-\cdots -h_{\ell -r +1+i}\\
&=&0 \, 
\end{eqnarray*}
which shows $v_r(t)=t^{d}v_r(1/t)$ and $w_r(t)=t^{r+s-\ell -1}w_r(1/t)$, respectively. Uniqueness of $v_r(t)$ and $w_r(t)$ with the assumed properties is easily verified. \qedhere
\end{proof}

\begin{prop}\label{prop:representation}\label{prop:formula}
Let $h(t)$ be a polynomial of degree  $s\leq d$ and let $(p,q)$ be its symmetric decomposition. Let $r\geq 1$ and let $(\tilde{p},\tilde{q})$ be the symmetric decomposition of $U_r^{d+1} h (t)$. Then 
\[
\tilde{p}(t)=(p(t)(1+t+\cdots + t^{r-1})^d)\hri{r}{0}
\]
and 
\[
\tilde{q}(t)=\frac{1}{t}((h(t)(1+t+\cdots + t^{r-1})-p(t))(1+t+\cdots + t^{r-1})^d))\hri{r}{0} \, .
\]
If furthermore $r\geq \ell=d+1-s$ then
\[
\tilde{q}(t)=\frac{1}{t}(t^\ell w_r(t)(1+t+\cdots + t^{r-1})^d))\hri{r}{0} \, ,
\]
where $w_r(t)$ is defined as in Proposition~\ref{prop:decomp2}.
\end{prop}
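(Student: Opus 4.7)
The plan is to define candidate polynomials
\[
\tilde p^*(t) := (p(t) A(t)^d)\hri{r}{0} \quad \text{and} \quad \tilde q^*(t) := \tfrac{1}{t}\bigl((h(t)A(t) - p(t))A(t)^d\bigr)\hri{r}{0},
\]
where $A(t) := 1+t+\cdots+t^{r-1}$, and then verify three things: (i) $\tilde p^*(t) + t\tilde q^*(t) = \Urd{r}{d+1}h(t)$, (ii) $\tilde p^*$ is symmetric of degree $d$, and (iii) $\tilde q^*$ is a polynomial of degree at most $d-1$ satisfying $\tilde q^*(t) = t^{d-1}\tilde q^*(1/t)$. Uniqueness of the symmetric decomposition will then force $(\tilde p^*, \tilde q^*) = (\tilde p, \tilde q)$.

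Condition (i) follows at once from linearity of $\hri{r}{0}$ and Lemma~\ref{lem:dilatedh}, since $pA^d + (hA - p)A^d = hA^{d+1}$. Condition (ii) rests on the general principle that if $f(t) = t^{rD}f(1/t)$, then the coefficients satisfy $f_{rk} = f_{r(D-k)}$, so $f\hri{r}{0}(t) = \sum_k f_{rk}t^k$ is symmetric of degree $D$. Since $p$ is symmetric of degree $d$ and $A^d$ is symmetric of degree $d(r-1)$, the product $pA^d$ is symmetric of degree $dr$, and the principle with $D=d$ delivers (ii).

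The main obstacle is condition (iii). Setting $\psi(t) := (hA - p)A^d$, the key step is to prove the polynomial identity $\psi(t) = t^{r(d+1)}\psi(1/t)$. Given this, the same general principle applied with $D = d+1$ yields $\psi\hri{r}{0}(t) = t^{d+1}\psi\hri{r}{0}(1/t)$; since $h(0) = p(0)$ (from $h = p + tq$) the polynomial $\psi$ has vanishing constant term, so does $\psi\hri{r}{0}$, and therefore $\tilde q^* = \psi\hri{r}{0}/t$ is a polynomial of degree at most $d-1$ satisfying $\tilde q^*(t) = t^{d-1}\tilde q^*(1/t)$. To establish the identity for $\psi$, I expand $t^{r(d+1)}\psi(1/t)$ using the symmetries $p(1/t) = t^{-d}p(t)$, $A(1/t)^k = t^{-k(r-1)}A(t)^k$, and the definition $\mathcal{I}_d h(t) = t^dh(1/t)$; after simplification the identity becomes $(h - t\mathcal{I}_d h)A^{d+1} = (1 - t^r)pA^d$, and after dividing by $A^{d+1}$ and using $1 - t^r = (1 - t)A$, this collapses to $p(1-t) = h - t\mathcal{I}_d h$. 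This is precisely Lemma~\ref{lem:solus}.

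Finally, when $r \geq \ell$, Proposition~\ref{prop:decomp2} provides the decomposition $h(t)A(t) = p(t) + t^\ell w_r(t)$, so substituting $hA - p = t^\ell w_r$ into the definition of $\tilde q^*$ gives the refined formula $\tilde q^*(t) = \tfrac{1}{t}\bigl(t^\ell w_r(t)A(t)^d\bigr)\hri{r}{0}$.
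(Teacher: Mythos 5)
Your proof is correct, and while it shares the same algebraic ingredients as the paper's argument, it is organized along a genuinely different route. The paper applies Lemma~\ref{lem:solus} directly to $f := \Urd{r}{d+1}h(t)$ to get $\tilde p = (f - t\mathcal{I}_d f)/(1-t)$, then computes $f - t\mathcal{I}_d f$ by pushing the involution $t\mapsto 1/t$ through $\hri{r}{0}$ and using the symmetries of $p$, $q$, and $A$; after that, $\tilde q$ comes for free from $t\tilde q = f - \tilde p$, with no need to separately establish symmetry or degree bounds. You instead propose candidate polynomials and verify the three defining properties of the symmetric decomposition (sum, symmetry of $\tilde p^*$, symmetry and degree of $\tilde q^*$) before invoking uniqueness. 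Your ``general principle'' --- if $g(t) = t^{rD}g(1/t)$ then $g\hri{r}{0}(t) = t^{D}g\hri{r}{0}(1/t)$ --- is a clean, reusable packaging of the compatibility of $\hri{r}{0}$ with $t\mapsto 1/t$ that the paper uses only implicitly. The price you pay is a longer verification: your check of the symmetry of $\psi = (hA-p)A^d$ ultimately collapses to the identity $(1-t)p = h - t\mathcal{I}_d h$, which is precisely where the paper's proof begins. In short, you end where the paper starts, and the paper's derive-rather-than-verify structure is more economical, but your argument is complete and correct, including the constant-term observation $\psi(0) = h(0) - p(0) = 0$ that lets you divide $\psi\hri{r}{0}$ by $t$.
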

\begin{proof}
Let $f(t)$ denote $\Urd{r}{d+1}  h(t)$. Then, by Lemma~\ref{lem:solus}, 
\[
\tilde{p}(t)=\frac{f(t)-t\mathcal{I}_d (f(t))}{1-t} \, .
\]
We have 
\begin{eqnarray*}
f(t)&=&((p(t)+tq(t))(1+t+\cdots + t^{r-1})^{d+1}))\hri{r}{0}
\end{eqnarray*}
and
\begin{eqnarray*}
t\mathcal{I}_d (f(t))&=&t^{d+1}(h(1/t)(1+1/t+\cdots + 1/t^{r-1})^{d+1}))\hri{r}{0}\\
&=&(t^{r(d+1)}(p(1/t)+1/tq(1/t))(1+1/t+\cdots + 1/t^{r-1})^{d+1}))\hri{r}{0}\\
&=&(t(p(t)+q(t))(1+t+\cdots + t^{r-1})^{d+1}))\hri{r}{0} \, .
\end{eqnarray*}
We thus obtain
\begin{eqnarray*}
f(t)-t\mathcal{I}_d (f(t))&=&((p(t)-tp(t))(1+t+\cdots + t^{r-1})^{d+1}))\hri{r}{0} \\
&=&((1-t^r)p(t)(1+t+\cdots + t^{r-1})^{d}))\hri{r}{0} \\
&=&(1-t)(p(t)(1+t+\cdots + t^{r-1})^{d}))\hri{r}{0}
\end{eqnarray*}
which proves the claimed formula for $\tilde{p}(t)$. For $\tilde{q}(t)$ we observe that
\begin{eqnarray}
t\tilde{q}(t)=f(t)-\tilde{p}(t)=((h(t)(1+t+\cdots +t^{r-1})-p(t))(1+t+\cdots + t^{r-1})^d)\hri{r}{0} \, .
\end{eqnarray}
If $r\geq \ell$ then, by Proposition~\ref{prop:decomp2}, 
\[
h(t)(1+t+\cdots +t^{r-1})-p(t)=t^\ell w_r(t)
\]
and the claim follows.
\end{proof}
\begin{cor}\label{cor:nonnegativity}
Let $h(t)$ be a polynomial of degree at most $d$ and let $(\tilde p, \tilde{q})$ be the symmetric decomposition of $\Urd{r}{d+1} h(t)$. 
\begin{itemize}
\item[(i)] If $h(t)$ satisfies Property \hyperlink{H}{(H)} then $\tilde p$ has nonnegative coefficients.
\item[(ii)] If $h(t)$ has nonnegative coefficients, satisfies Property \hyperlink{S}{(S)} and $r\geq \ell$ then $\tilde q$ has nonnegative coefficients.
\end{itemize}
\end{cor}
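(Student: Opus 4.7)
The plan is to apply the explicit formulas for $\tilde p$ and $\tilde q$ given by Proposition~\ref{prop:formula} and reduce each nonnegativity claim to the nonnegativity of a polynomial built directly from $h(t)$. Two elementary facts are used throughout: products of polynomials with nonnegative coefficients have nonnegative coefficients, and the operator $\hri{r}{0}$, which extracts the coefficients at multiples of $r$, preserves nonnegativity.

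For part (i), the formula $\tilde p(t) = (p(t)(1+t+\cdots+t^{r-1})^d)\hri{r}{0}$ reduces the claim to showing $p(t)$ has nonnegative coefficients. Reading off the formula $p_{\ell,i} = h_0+\cdots+h_i - h_d-\cdots-h_{d-i+1}$ from Lemma~\ref{lem:extendedStapledon}, we see that $p_\ell$ has nonnegative coefficients if and only if Property (H) holds; since Lemma~\ref{lem:extendedStapledon} also identifies $p_\ell = p$, this yields $p\geq 0$. The product $p(t)(1+t+\cdots+t^{r-1})^d$ then has nonnegative coefficients, and applying $\hri{r}{0}$ gives $\tilde p \geq 0$.

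For part (ii), the assumption $r \geq \ell$ permits using the second formula for $\tilde q$ in Proposition~\ref{prop:formula}, reducing the claim to showing $w_r$ has nonnegative coefficients. By Proposition~\ref{prop:decomp2},
\[
w_{r,i} \;=\; -(h_{\ell-r}+\cdots+h_{\ell+i-r}) \;+\; (h_s + h_{s-1} + \cdots + h_{s-i}),
\]
where $h_j = 0$ outside $\{0,\ldots,s\}$. Since $r\geq \ell$, all indices $j<0$ contribute $0$, so the first sum equals either $0$ (when $\ell+i-r<0$) or $h_0+\cdots+h_{\ell+i-r}$. In the former case $w_{r,i}\geq 0$ is immediate from nonnegativity of $h$; in the latter, Property (S) at index $\ell+i-r$ gives $h_0+\cdots+h_{\ell+i-r} \leq h_s+\cdots+h_{s-\ell-i+r}$, and since $r\geq \ell$ implies $s-\ell-i+r \geq s-i$, this is further bounded by $h_s+\cdots+h_{s-i}$ using nonnegativity. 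Hence $w_{r,i}\geq 0$. The polynomial $t^\ell w_r(t)(1+t+\cdots+t^{r-1})^d$ then has nonnegative coefficients and vanishes at $t=0$ since $\ell\geq 1$, so applying $\hri{r}{0}$ and dividing by $t$ yields $\tilde q \geq 0$.

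The main obstacle is really just the index bookkeeping for $w_r\geq 0$; everything else is a routine application of positivity-preserving operations to the formulas in Proposition~\ref{prop:formula}.
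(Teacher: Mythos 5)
Your proof is correct and takes essentially the same route as the paper: both parts reduce via Proposition~\ref{prop:formula} to the nonnegativity of $p(t)$ and of $w_r(t)$, with $p\ge 0$ coming from Property (H) via Lemma~\ref{lem:extendedStapledon} (equivalently Observation~\ref{obs:obs3}), and $w_{r,i}\ge 0$ coming from Property (S) together with nonnegativity of the $h_i$. The only cosmetic difference is in the bookkeeping for $w_{r,i}$: you apply (S) at index $\ell+i-r$ and then pad the $h_s+\cdots$ sum, whereas the paper first bounds $-h_{\ell-r}-\cdots-h_{\ell+i-r}\ge -h_0-\cdots-h_i$ and then applies (S) at index $i$; these are equivalent.
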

\begin{proof}
By Observation~\ref{obs:obs3}, $p(t)$ has nonnegative coefficients whenever $h(t)$ satisfies Property \hyperlink{H}{(H)}. From Proposition~\ref{prop:formula} we see that in this case also $\tilde p(t)$ has nonnegative coefficients. If $r\geq \ell$, then by Proposition~\ref{prop:decomp2}
\[
w_{r,i}=-h_{\ell -r}-h_{\ell -r +1}-\cdots -h_{\ell+i-r}+h_s+h_{s-1}+\cdots +h_{s-i}\geq -h_0-h_1-\cdots -h_i+h_s+h_{s-1}+h_{s-i}\geq 0
\]
where we used that $h_i\geq 0$ for all $i$ and that $h(t)$ satisfies Property \hyperlink{S}{(S)}. Thus, again by Proposition~\ref{prop:formula}, we see that $\tilde q(t)$ has nonnegative coefficients.
\end{proof}
The bound given in Corollary~\ref{cor:nonnegativity} (ii) is optimal. To see this consider a polynomial $h(t)$ of degree $s$ with $h(0)>0$ and symmetric decomposition $(p,q)$. Let $(\tilde{p},\tilde{q})$ be the symmetric decomposition of $\Urd{r}{d+1}h$. By Proposition~\ref{prop:decomp2}, 
\[
t\tilde{q}=(((1+t+\cdots + t^{r-1})h(t)-p(t))(1+t+\cdots +t^{r-1})^d)\hri{r}{0} \, .
\]
If $r<\ell$, then the leading coefficient of $(1+t+\cdots + t^{r-1})h(t)-p(t)$ equals $-p_d=-p_0=-h_0<0$ which is equal to the leading coefficient of $t\tilde{q}$.

\section{Real-rootedness}\label{sec:realrootedness}
The goal of this section is to prove Theorem~\ref{thm:main}. We use the following lemma.
\begin{lem}\label{lem:arealrooted}
Let $g(t)=\sum _{i=0}^d g_it^i$ be a polynomial of degree at most $d$ with nonnegative coefficients and let $\ell\geq 1$ be such that 
\[
g_{0}\leq g_{1}\leq \cdots \leq g_{\ell -1} \, \quad \text{ and } \quad g_{d+1-\ell}\geq \cdots \geq g_{d-1}\geq g_{d} \, .
\]
Then 
\[
\Urd{r}{d}  g (t)=\left(g(t)(1+t+\cdots + t^{r-1})^d\right)\hri{r}{0}
\]
is real-rooted for all $r\geq \max\{d+1-\ell,\frac{d+1}{2}\}$.
\end{lem}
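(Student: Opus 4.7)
The plan is to apply Lemma~\ref{lem:commoninterlacer} to the product-rule expansion
\[
\Urd{r}{d}g(t) \;=\; g\hri{r}{0}\,a_d\hri{r}{0}\;+\;\sum_{i=1}^{r-1} g\hri{r}{i}\cdot t\,a_d\hri{r}{r-i},
\]
obtained from $\Urd{r}{d}g(t)=(g(t)(1+t+\cdots+t^{r-1})^d)\hri{r}{0}$ by the same product rule for $\hri{r}{0}$ that is used in the derivation of Lemma~\ref{lem:reform}. By Proposition~\ref{prop:ans} together with parts (i) and (iv) of Lemma~\ref{lem:basicsinterlace}, the sequence $(a_d\hri{r}{0},\,ta_d\hri{r}{r-1},\,ta_d\hri{r}{r-2},\,\ldots,\,ta_d\hri{r}{1})$ is interlacing with positive leading coefficients. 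It therefore suffices to prove that
\[
\bigl(g\hri{r}{r-1},\,g\hri{r}{r-2},\,\ldots,\,g\hri{r}{0}\bigr)
\]
is itself an interlacing sequence of polynomials with positive leading coefficients.

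I would dispatch the range $r\geq d+1$ first: every $g\hri{r}{i}$ is then just the constant $g_i$, so $\Urd{r}{d}g$ is a nonnegative scalar combination of the interlacing $a_d$-sequence and is real-rooted by the remark following Lemma~\ref{lem:commoninterlacer}. For the remaining range $\max\{d+1-\ell,(d+1)/2\}\leq r\leq d$, I would first reduce to the case that every $g_i$ with $0\leq i\leq d$ is strictly positive by replacing $g$ with $g_\eps(t):=g(t)+\eps(1+t+\cdots+t^d)$; this perturbation preserves both monotonicity hypotheses and the value of $\ell$, and the original case follows as $\eps\to 0$ since a coefficientwise limit of real-rooted polynomials of uniformly bounded degree is real-rooted. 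With all $g_i>0$, the bound $r\geq (d+1)/2$ forces each $g\hri{r}{i}$ to have degree at most one: explicitly $g\hri{r}{i}(t)=g_i+g_{i+r}t$ for $0\leq i\leq d-r$ and $g\hri{r}{i}(t)=g_i$ for $d-r+1\leq i\leq r-1$.

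By Lemma~\ref{lem:interlacingends}, verifying interlacing of the $g$-sequence reduces to checking $g\hri{r}{k}\preceq g\hri{r}{k-1}$ for $1\leq k\leq r-1$ together with the endpoint $g\hri{r}{r-1}\preceq g\hri{r}{0}$. Any consecutive pair in which at least one side is a positive constant interlaces automatically, so the only substantive instance is $1\leq k\leq d-r$, where both sides are linear and the interlacing amounts to the inequality
\[
g_k\,g_{k+r-1}\;\geq\;g_{k-1}\,g_{k+r}\,.
\]
The hypothesis $r\geq d+1-\ell$ places $k-1,k$ in $[0,\ell-1]$ (so $g_k\geq g_{k-1}$) and $k+r-1,k+r$ in $[d+1-\ell,d]$ (so $g_{k+r-1}\geq g_{k+r}$), and multiplying these two nonnegative inequalities yields the claim. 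The endpoint condition is automatic whenever $r>(d+1)/2$ since then $g\hri{r}{r-1}$ is a positive constant; in the only remaining boundary case $r=(d+1)/2$ (which forces $d$ odd and $\ell\geq r$) both endpoints are linear and the needed inequality $g_{r-1}g_r\geq g_0\,g_d$ is immediate from $g_{r-1}\geq g_0$ and $g_r\geq g_d$.

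The most delicate part of the argument is the index bookkeeping: the bounds $r\geq d+1-\ell$ and $r\geq (d+1)/2$ have to be used in concert — the first to align the indices appearing in the substantive interlacing inequality with the monotonicity ranges $[0,\ell-1]$ and $[d+1-\ell,d]$, and the second to ensure that every $g\hri{r}{i}$ is either linear or constant so that pairwise interlacing reduces to a single root comparison.
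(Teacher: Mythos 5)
Your proof is correct and follows essentially the same route as the paper: both expand $\Urd{r}{d}g$ via the product rule into $\sum g\hri{r}{i}\cdot(\text{shifted } a_d\hri{r}{\cdot})$, use $r\geq\frac{d+1}{2}$ to force each $g\hri{r}{i}$ to be constant or linear, use $r\geq d+1-\ell$ to place the relevant indices in the two monotonicity ranges, deduce that $(g\hri{r}{r-1},\ldots,g\hri{r}{0})$ is interlacing, and finish with Lemma~\ref{lem:commoninterlacer}. The paper compresses the interlacing verification into a single "from that we see," whereas you spell it out (pairwise checks via Lemma~\ref{lem:interlacingends}, the $g_kg_{k+r-1}\geq g_{k-1}g_{k+r}$ inequality, the $r=\frac{d+1}{2}$ endpoint, and a perturbation to positive coefficients to handle degenerate cases) — these are correct and genuinely clarifying details rather than a different argument.
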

\begin{proof}
By Theorem~\ref{lem:reform}
\begin{eqnarray}
\Urd{r}{d}  g (t)&=&g\hri{r}{0}a_{d}\hri{r}{0}+g\hri{r}{1}ta_{d}\hri{r}{r-1}+\cdots +g\hri{r}{r-1}ta_{d}\hri{r}{1}\, .
\end{eqnarray}
Since $r\geq \frac{d+1}{2}$, $g\hri{r}{i}$ has degree at most $1$ for all $i$. More precisely,
\[
g\hri{r}{i}=\begin{cases}g_i+g_{r+i}t \quad & \text{ if }  0\leq i\leq d-r\\
g_i \quad & \text{ if } d-r+1\leq i\leq r-1
\end{cases}
\]
Since $d-r\leq \ell -1$, by assumption we have $g_{0}\leq g_{1}\leq \cdots \leq g_{d-r}$ and $g_{r}\geq \cdots \geq g_{d-1}\geq g_{d}$. From that we see that
\[
(g\hri{r}{r-1}, g\hri{r}{r-2},\ldots ,g\hri{r}{0})
\]
is an interlacing sequence. Since, by Proposition~\ref{prop:ans}, $(a_{d}\hri{r}{0},ta_{d}\hri{r}{r-1},\ldots, ta_{d}\hri{r}{1})$ is an interlacing sequence, the claim follows by Lemma~\ref{lem:commoninterlacer}.
\end{proof}

\begin{proof}[Proof of Theorem~\ref{thm:main}]
Let $(p,q)$ be the symmetric decomposition of $h(t)$ and $(\tilde{p},\tilde{q})$ be the symmetric decomposition of $\Urd{r}{d+1}h(t)$. By Proposition~\ref{prop:representation}, 
\[
\tilde{p}(t)=\left(p(t)(1+t+\cdots + t^{r-1})^d\right)\hri{r}{0} \, .
\]
By Lemma~\ref{lem:extendedStapledon},
\begin{equation}\label{eq:proofmain1}
p_i=h_0+h_1+\cdots + h_i-h_d-h_{d-1}-\cdots  -h_{d-i+1}\, .
\end{equation}
Therefore, since $h(t)$ satisfies Property \hyperlink{H}{(H)}, $p(t)$ and thus also $\tilde{p}(t)$ have only nonnegative coefficients. Since $h(t)$ has nonnegative coefficients and $h_i=0$ for all $i\geq s+1$ we furthermore obtain from equation \eqref{eq:proofmain1} 
\[
p_{0}\leq p_{1}\leq \cdots \leq p_{\ell -1}\quad \text{ and } \quad p_{d+1-\ell}\geq \cdots \geq p_{d-1}\geq p_{d}\, ,
\] 
where $\ell=d+1-s$ denotes the co-degree as usual. Since $\max \{s,\frac{d+1}{2}\}=\max \{d+1-\ell,\frac{d+1}{2}\}$ we obtain that $\tilde{p}(t)=\Urd{r}{d} p(t)$ is real-rooted for all $r\geq \max \{s,\frac{d+1}{2}\}$ by Lemma~\ref{lem:arealrooted}. In particular, $\tilde{p}(t)$ is real-rooted for all $r\geq \max \{s,d+1-s\}\geq \max \{s,\frac{d+1}{2}\}$.

To see that $\tilde{q}(t)$ is real-rooted we recall that since $r\geq \ell$, by Proposition~\ref{prop:representation},
\[
t\tilde{q}(t)=\left(t^\ell w_r(t)(1+t+\cdots + t^{r-1})^d\right)\hri{r}{0} \, .
\]
Let
\[
f(t)=t^\ell w_r(t) \, .
\]
Then $f(t)=\sum _{i=0}^{r+s-1}f_i t^i$ is a polynomial of degree $r+s-1$ and has nonnegative coefficients as was argued in the proof of Corollary~\ref{cor:nonnegativity}. In particular, also $\tilde{q}(t)$ has only nonnegative coefficients. We distinguish two cases:

If $s\leq \ell$ then
\[
f\hri{r}{i}=\begin{cases}
f_{i+r}t & \text{ if } 0\leq i \leq s-1\\
f_i & \text{ if } s\leq i \leq r-1 \, .
\end{cases}
\]
In particular, $(f\hri{r}{r-1},f\hri{r}{r-2},\ldots,f\hri{r}{0})$ is an interlacing sequence. Therefore, by Lemma~\ref{lem:commoninterlacer},
\[
t\tilde{q}(t)=f\hri{r}{0}a_d\hri{r}{0}+f\hri{r}{1}ta_d \hri{r}{r-1}+\cdots +f\hri{r}{r-1}ta_d \hri{r}{1}
\]
is real-rooted.

If $\ell \leq s$ then
\begin{equation}\label{eq:case2}
f\hri{r}{i}=\begin{cases}
f_{i+r}t & \text{ if } 0\leq i \leq \ell -1\\
f_i+f_{i+r}t & \text{ if } \ell \leq i \leq s-1\\
f_i & \text{ if } s\leq i \leq r-1 \, .
\end{cases}
\end{equation}
We observe that by Proposition~\ref{prop:decomp2}
\begin{eqnarray*}
f_i&=&\begin{cases}0 & \text{ if }i\leq \ell -1\\
w_{r,i-\ell} & \text{ if } i\geq \ell
\end{cases}\\
&=&\begin{cases}0 & \text{ if }i\leq \ell -1\\ 
-h_{\ell -r}-h_{\ell +1-r}-\cdots -h_{i-r}+h_s+h_{s-1}+\cdots+h_{s-i+\ell} & \text{ if } i\geq \ell
\end{cases}
\end{eqnarray*}
In particular, since $r\geq s$ we have 
\[
f_\ell \leq f_{\ell +1} \leq \cdots \leq f_{s-1}
\]
and therefore by symmetry of $w_r (t)$ also
\[
f_{r+\ell}\geq f_{r+\ell +1}\geq \cdots \geq f_{r+s-1} \, .
\]
Together with equation~\eqref{eq:case2} this implies that $(f\hri{r}{r-1},f\hri{r}{r-2},\ldots,f\hri{r}{0})$ is an interlacing sequence. Therefore, again as in the first case, it follows by Lemma~\ref{lem:commoninterlacer} that $t \tilde{q}$ and thus $\tilde{q}$ is real-rooted. This finishes the argument.
\end{proof}
\begin{rem}
The proof of Theorem~\ref{thm:main} actually shows something slightly stronger if $h(t)$ has small degree: If  $\deg h\leq \frac{d+1}{2}$ then  $\tilde{p}(t)$ in the symmetric decomposition of $\Urd{r}{d+1}h(t)$ is already real-rooted whenever $r\geq \frac{d+1}{2}$. 
\end{rem}

\section{Interlacing Symmetric Decompositions}\label{sec:interlacing}
In this section we investigate when $\Urd{r}{d+1} h(t)$ has an interlacing symmetric decomposition. We obtain the following characterization.
\begin{prop}\label{lem:essential}
Let $h(t)$ be a polynomial of degree $s\leq d$ with nonnegative coefficients that satisfies Property \hyperlink{H}{(H)} and \hyperlink{S}{(S)}. For all $r\geq \ell=d+1-s$, $\Urd{r}{d+1} h(t)$ has an interlacing symmetric decomposition if and only if 
\[
\Urd{r}{d+1} h(t)\preceq \Urd{r}{d+1} (\mathcal I _{d+1} h(t)) \, .
\]
\end{prop}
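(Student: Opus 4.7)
The plan is to reduce the claimed equivalence to a single identity relating the operators $\Urd{r}{d+1}$ and the reflections $\mathcal{I}_d$, $\mathcal{I}_{d+1}$, after which the conclusion follows by combining Theorem~\ref{thm:solus2} with Lemma~\ref{lem:basicsinterlace}(iv).

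First, since $h(t)$ has nonnegative coefficients, satisfies \hyperlink{H}{(H)} and \hyperlink{S}{(S)}, and $r\geq \ell$, Corollary~\ref{cor:nonnegativity} guarantees that the symmetric decomposition $(\tilde p,\tilde q)$ of $\Urd{r}{d+1}h(t)$ has $\tilde p$ and $\tilde q$ both nonnegative. By Theorem~\ref{thm:solus2}, the symmetric decomposition of $\Urd{r}{d+1}h(t)$ is interlacing if and only if $\mathcal{I}_d(\Urd{r}{d+1}h)\preceq \Urd{r}{d+1}h$. So it suffices to show that this last relation is equivalent to $\Urd{r}{d+1}h\preceq \Urd{r}{d+1}(\mathcal{I}_{d+1}h)$.

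The main step of the proof is to establish the identity
\[
\Urd{r}{d+1}(\mathcal{I}_{d+1}h)(t) \ =\ t\cdot \mathcal{I}_d\bigl(\Urd{r}{d+1}h\bigr)(t).
\]
To derive it, I would apply Lemma~\ref{lem:dilatedh} to rewrite $\Urd{r}{d+1}(\mathcal{I}_{d+1}h)$ as $(\mathcal{I}_{d+1}h(t)\cdot(1+t+\cdots+t^{r-1})^{d+1})\hri{r}{0}$, and then use the palindromicity of the factor $(1+t+\cdots+t^{r-1})^{d+1}$ together with $\mathcal{I}_{d+1}h(t)=t^{d+1}h(1/t)$ to express
\[
\mathcal{I}_{d+1}h(t)\cdot(1+t+\cdots+t^{r-1})^{d+1}=t^{r(d+1)}H(1/t),
\]
where $H(t)=h(t)(1+t+\cdots+t^{r-1})^{d+1}$. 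Extracting the $\hri{r}{0}$-part and comparing coefficients then shows that the coefficient of $t^k$ in $\Urd{r}{d+1}(\mathcal{I}_{d+1}h)$ equals $[t^{r(d+1-k)}]H$, which matches the coefficient of $t^k$ in $t\,\mathcal{I}_d(\Urd{r}{d+1}h)$. This coefficient computation is where I expect the main technical work to lie, but the symmetry of $(1+t+\cdots+t^{r-1})^{d+1}$ makes it a direct verification.

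With the identity in hand, I conclude as follows. Since $\tilde p$ and $\tilde q$ have nonnegative coefficients, so does $\Urd{r}{d+1}h=\tilde p+t\tilde q$, and consequently so does its reflection $\mathcal{I}_d(\Urd{r}{d+1}h)$; hence any real roots of these two polynomials are nonpositive. Real-rootedness of the relevant polynomials is built into the interlacing relations appearing on both sides of the iff, so the hypotheses of Lemma~\ref{lem:basicsinterlace}(iv) are met and we obtain the chain of equivalences
\[
\mathcal{I}_d(\Urd{r}{d+1}h)\preceq \Urd{r}{d+1}h \ \iff\ \Urd{r}{d+1}h\preceq t\,\mathcal{I}_d(\Urd{r}{d+1}h) = \Urd{r}{d+1}(\mathcal{I}_{d+1}h),
\]
which together with the initial reduction via Theorem~\ref{thm:solus2} finishes the argument.
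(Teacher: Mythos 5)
Your proposal is correct and follows essentially the same route as the paper: reduce via Corollary~\ref{cor:nonnegativity} and Theorem~\ref{thm:solus2} to the condition $\mathcal{I}_d(\Urd{r}{d+1}h)\preceq \Urd{r}{d+1}h$, convert this with Lemma~\ref{lem:basicsinterlace}(iv) (using nonnegativity of coefficients) into an interlacing against $t\,\mathcal{I}_d(\Urd{r}{d+1}h)=\mathcal{I}_{d+1}(\Urd{r}{d+1}h)$, and then verify the commutation identity $\mathcal{I}_{d+1}(\Urd{r}{d+1}h)=\Urd{r}{d+1}(\mathcal{I}_{d+1}h)$. The paper establishes that last identity by a short substitution $t\mapsto 1/t$ inside the $\hri{r}{0}$-extraction rather than by explicit coefficient comparison, but the two verifications are equivalent.
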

\begin{proof}
Since $r\geq \ell$, by Theorem~\ref{thm:solus2} and Corollary~\ref{cor:nonnegativity}, $\Urd{r}{d+1} h(t)$ has an interlacing symmetric decomposition if and only if $\mathcal{I}_d (\Urd{r}{d+1} h(t))$ interlaces $\Urd{r}{d+1} h(t)$. Since all coefficients of $\Urd{r}{d+1} h(t)$ are nonnegative this is the case if and only if $\Urd{r}{d+1} h(t)$ interlaces $t\mathcal{I}_d (\Urd{r}{d+1} h(t))=\mathcal{I}_{d+1} (\Urd{r}{d+1} h(t))$ which equals
\begin{eqnarray*}
\mathcal I _{d+1} (\Urd{r}{d+1} h(t))&=&t^{d+1}(h(1/t)(1+1/t+\cdots +1/t^{r-1})^{d+1})\hri{r}{0}\\
&=&(t^{r(d+1)}h(1/t)(1+1/t+\cdots +1/t^{r-1})^{d+1})\hri{r}{0}\\
&=&(t^{(d+1)}h(1/t)(1+t+\cdots +t^{r-1})^{d+1})\hri{r}{0}\\
&=&\Urd{r}{d+1}  (\mathcal I_{d+1} h(t))\, . 
\end{eqnarray*}\qedhere
\end{proof}
This characterization is used to prove the following result and Theorem~\ref{thm:main2}. 
\begin{prop}\label{prop:small}
Let $h(t)$ be a polynomial of degree $s\leq \frac{d+1}{2}$ with nonnegative coefficients that satisfies Property \hyperlink{S}{(S)}. Then for all $r\geq d+1$, $\Urd{r}{d+1} h(t)$ has an interlacing symmetric decomposition. 
\end{prop}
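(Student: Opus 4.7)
The plan is to apply Proposition~\ref{lem:essential} to reduce the result to an interlacing inequality, and then to verify that inequality by expanding both polynomials via Lemma~\ref{lem:reform} and comparing summands in the interlacing sequence of Proposition~\ref{prop:ans}. To invoke Proposition~\ref{lem:essential}, I first need Properties~\hyperlink{H}{(H)} and \hyperlink{S}{(S)}. The latter is hypothesized, and under $s \leq (d+1)/2$ (equivalently $s \leq \ell := d+1-s$) the former is automatic for any nonnegative $h$ of degree $s$: for $i < \ell$ the right-hand side of (H) is zero, since $h_j=0$ for $j>s$, while for $i \geq \ell \geq s$ the left-hand side already equals $h(1)$ and therefore dominates any partial sum of nonnegative $h_j$'s. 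Proposition~\ref{lem:essential} thus reduces the proof to showing
\[
A\;:=\;\Urd{r}{d+1} h(t)\;\preceq\;\Urd{r}{d+1}\!\left(\mathcal{I}_{d+1} h(t)\right)\;=:\;B
\qquad\text{for every }r\geq d+1.
\]

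To verify this, I expand both sides using Lemma~\ref{lem:reform}. Since $\deg h = s < r$, each projection $h\hri{r}{i}$ equals $h_i$ for $0\leq i\leq s$ and vanishes otherwise. A parallel analysis of $\mathcal{I}_{d+1} h(t) = \sum_{i=0}^s h_i t^{d+1-i}$, whose nonzero coefficients lie in degrees $[\ell,d+1]$, yields
\[
A \;=\; h_0\,a_{d+1}\hri{r}{0} \;+\; t\sum_{i=1}^s h_i\,a_{d+1}\hri{r}{r-i},
\qquad
B \;=\; t\sum_{k=0}^s h_k\,a_{d+1}\hri{r}{r-d-1+k}
\]
for $r > d+1$. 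In the boundary case $r=d+1$, the degree-$(d+1)$ term of $\mathcal{I}_{d+1} h$ wraps into block zero, adding the extra summand $h_0 t\,a_{d+1}\hri{r}{0}$ to $B$ without affecting the argument below.

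I then verify that every summand of $A$ is interlaced by every summand of $B$, so that Lemma~\ref{lem:basicsinterlace} parts~(iii) and~(ii) assemble the pairwise interlacings into $A\preceq B$. Two kinds of pairwise checks arise. First, $a_{d+1}\hri{r}{0}\preceq t\,a_{d+1}\hri{r}{j}$ is equivalent via Lemma~\ref{lem:basicsinterlace}(iv) to $a_{d+1}\hri{r}{j}\preceq a_{d+1}\hri{r}{0}$, which holds by Proposition~\ref{prop:ans}. Second, for $t\,a_{d+1}\hri{r}{r-i}\preceq t\,a_{d+1}\hri{r}{r-d-1+k}$ with $1\leq i\leq s$ and $0\leq k\leq s$, two applications of Lemma~\ref{lem:basicsinterlace}(iv) show that multiplication by $t$ preserves interlacing for polynomials with nonpositive roots, reducing the claim to $a_{d+1}\hri{r}{r-i}\preceq a_{d+1}\hri{r}{r-d-1+k}$. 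By Proposition~\ref{prop:ans} this amounts to the index inequality $r-i\geq r-d-1+k$, i.e., $i+k\leq d+1$, which holds because $i+k\leq 2s\leq d+1$---precisely the content of the hypothesis $s \leq (d+1)/2$.

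The main obstacle I anticipate is the bookkeeping in the expansion step---pinning down which projections $h\hri{r}{i}$ and $(\mathcal{I}_{d+1} h)\hri{r}{j}$ are nonzero and matching them with the correct $a_{d+1}\hri{r}{\cdot}$ partners coming from Lemma~\ref{lem:reform}. The boundary case $r=d+1$ requires a minor separate check for the wrapped top-degree contribution of $\mathcal{I}_{d+1} h$, but the extra summand lands on the correct side of the index separation $s\leq\ell$ and does not disturb the conclusion.
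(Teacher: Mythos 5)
Your proof is correct and follows essentially the same route as the paper: reduce to the interlacing relation via Proposition~\ref{lem:essential} (after noting that \hyperlink{H}{(H)} is automatic when $s\leq (d+1)/2$), expand both sides by Lemma~\ref{lem:reform}, and deduce $\Urd{r}{d+1} h \preceq \Urd{r}{d+1}(\mathcal I_{d+1}h)$ from the interlacing sequence of Proposition~\ref{prop:ans} combined with Lemma~\ref{lem:basicsinterlace}. Your pairwise verifications and the $r=d+1$ boundary remark simply make explicit what the paper compresses into citing the single interlacing subsequence $\bigl(a_{d+1}\hri{r}{0},ta_{d+1}\hri{r}{r-1},\ldots,ta_{d+1}\hri{r}{r-s},ta_{d+1}\hri{r}{r-\ell},\ldots,ta_{d+1}\hri{r}{r-(d+1)}\bigr)$; note only that for $r=d+1$ the term $h_0\,t\,a_{d+1}\hri{r}{0}$ is not ``extra'' but is exactly the $k=0$ summand with index $r-(d+1)=0$, so the general formula already covers it.
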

\begin{proof}
By Lemma~\ref{lem:reform}, we have
\[
\Urd{r}{d+1} h(t)=h_0a_{d+1}\hri{r}{0}+h_1ta_{d+1}\hri{r}{r-1}+\ldots + h_sta_{d+1}\hri{r}{r-s}
\]
and 
\[
\Urd{r}{d+1} (\mathcal I _{d+1} h(t))=h_0ta_{d+1}\hri{r}{r-(d+1)}+h_1ta_{d+1}\hri{r}{r-d}+\ldots + h_sta_{d+1}\hri{r}{r-(d+1-s)} \, .
\]
Since $s\leq d+1-s$, the polynomials $(a_{d+1}\hri{r}{0},ta_{d+1}\hri{r}{r-1},\ldots, ta_{d+1}\hri{r}{r-s},ta_{d+1}\hri{r}{r-(d+1-s)},\ldots,ta_{d+1}\hri{r}{r-(d+1)})$ form an interlacing sequence and thus we obtain $\Urd{r}{d+1} h(t)\preceq \Urd{r}{d+1} (\mathcal I _{d+1} h(t))$ by repeated application of Lemma~\ref{lem:basicsinterlace}. Furthermore, since Property \hyperlink{H}{(H)} is automatically satisfied whenever $\deg h \leq \frac{d+1}{2}$, $\Urd{r}{d+1} h(t)$ has an interlacing symmetric decomposition by Lemma~\ref{lem:essential}.
\end{proof}
To prove Theorem~\ref{thm:main2} we use the following generalization of Lemma~\ref{prop:ans}. This should be compared with Fisk~\cite[Proposition 3.72 \& Example 3.76]{fisk2006polynomials}. Compare also with Br\"and\'en~\cite[Section 8]{Brandenunimodal}.
\begin{lem}\label{lem:ans2}
Let $h(t)$ be a polynomial and for all $0\leq i<r$ and $d\geq 0$ let
\[
a_{h,d}\hri{r}{i}:=(h(t)(1+t+\cdots +t^{r-1})^d)\hri{r}{i} \, .
\]
Then 
\[
a_{h,d+1}\hri{r}{i}=a_{h,d}\hri{r}{0}+a_{h,d}\hri{r}{1}+\cdots +a_{h,d}\hri{r}{i}+ta_{h,d}\hri{r}{i+1}+\cdots +ta_{h,d}\hri{r}{r-1} \, .
\]
In particular, if $h(t)$ is of degree $s$ and has only nonnegative coefficients then
\[
(a_{h,d}\hri{r}{r-1}, a_{h,d}\hri{r}{r-2},\ldots, a_{h,d} \hri{r}{0})
\]
is an interlacing sequence of polynomials for all $r\geq s$.
\end{lem}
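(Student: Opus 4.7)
The plan is to handle the algebraic identity first and then deduce the interlacing statement by induction on $d$.

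For the identity, set $g(t) := h(t)(1+t+\cdots+t^{r-1})^d$, so that $a_{h,d+1}\hri{r}{i} = ((1+t+\cdots+t^{r-1})g(t))\hri{r}{i}$. Expanding $(1+t+\cdots+t^{r-1})g(t) = \sum_{k=0}^{r-1}t^k g(t)$ and reading off the coefficient of $t^{rn+i}$: when $k\leq i$ this equals the coefficient of $t^{rn+i-k}$ of $g$, contributing $g\hri{r}{i-k}$; when $k>i$, rewriting $rn+i-k = r(n-1)+(r+i-k)$ gives an extra factor of $t$ and contributes $t\cdot g\hri{r}{r+i-k}$. Reindexing $j = i-k$ in the first sum and $j = r+i-k$ in the second combines the two cases into the stated formula.

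For the interlacing claim, I would induct on $d$. The base case $d=0$ reduces to $(h\hri{r}{r-1},\dots,h\hri{r}{0})$: since $r\geq s$, each slice is a constant $h_i$ (or zero), except in the boundary case $s=r$ where the index-$0$ slice equals $h_0+h_rt$. Such a sequence of constants with at most one linear polynomial is trivially interlacing. For the inductive step, write $A_j := a_{h,d}\hri{r}{j}$ and $B_i := a_{h,d+1}\hri{r}{i}$, so by the identity $B_i = \sum_{j\leq i}A_j + t\sum_{j>i}A_j$. For arbitrary $c_0,\dots,c_{r-1}\geq 0$, reordering the double sum yields
\[
\sum_i c_i B_i \;=\; \sum_{j=0}^{r-1}(\alpha_j+\beta_jt)\,A_j,
\]
where $\alpha_j := \sum_{i\geq j}c_i$ and $\beta_j := \sum_{i<j}c_i$. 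Using $\alpha_j = \alpha_{j+1}+c_j$ and $\beta_{j+1}=\beta_j+c_j$, a direct calculation gives
\[
\alpha_j\beta_{j+1} - \alpha_{j+1}\beta_j \;=\; c_j\,\textstyle\sum_i c_i \;\geq\; 0,
\]
showing that the linear polynomials $C_j := \alpha_j+\beta_jt$ form an interlacing sequence $(C_0,\dots,C_{r-1})$. By the inductive hypothesis $(A_{r-1},\dots,A_0)$ is interlacing, so Lemma~\ref{lem:commoninterlacer} applied to these two interlacing sequences gives real-rootedness of $\sum_jC_jA_j = \sum_ic_iB_i$. Hence every nonnegative combination of the $B_i$'s is real-rooted, forcing each pair $B_i,B_j$ to mutually interlace. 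To pin down that the asserted order $(B_{r-1},\dots,B_0)$ is the correct one, I would invoke Lemma~\ref{lem:interlacingends}: writing $B_i = tS + (1-t)S_i$ with $S_i := \sum_{j\leq i}A_j$ and $S := \sum_jA_j$, the adjacent interlacings $B_{i+1}\preceq B_i$ and the single endpoint $B_{r-1}\preceq B_0$ can be verified by tracking how the largest root of $B_i$ varies monotonically with $i$.

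The main obstacle is this final ordering step. Real-rootedness of every nonnegative combination (obtained via Lemma~\ref{lem:commoninterlacer}) guarantees mutual interlacing of the $B_i$'s in \emph{some} order, but establishing that the correct ordering is precisely $(B_{r-1},\dots,B_0)$ — in particular the global endpoint relation $B_{r-1}\preceq B_0$ comparing the balanced sum $S$ to the $t$-weighted sum $A_0 + t(S-A_0)$ — requires a delicate root-tracking argument exploiting both the nonnegativity of the coefficients and the interlacing structure of $(A_{r-1},\dots,A_0)$.
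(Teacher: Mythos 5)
Your derivation of the identity $a_{h,d+1}\hri{r}{i}=\sum_{j\le i}a_{h,d}\hri{r}{j}+t\sum_{j>i}a_{h,d}\hri{r}{j}$ is correct and is in substance the same computation as in the paper (expand $(1+t+\cdots+t^{r-1})g(t)$ and sort terms by residue of the exponent mod~$r$). The base case $d=0$ is also handled correctly, matching the paper.

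The inductive step, however, has a genuine gap that you yourself flag. Showing that every nonnegative combination $\sum_i c_iB_i$ is real-rooted (which your clever $\alpha_j,\beta_j$ computation together with Lemma~\ref{lem:commoninterlacer} does establish) only yields that the $B_i$ are pairwise interlacing in \emph{some} direction for each pair, equivalently that they admit a common interlacer. It does not, on its own, certify that the specific order $B_{r-1}\preceq B_{r-2}\preceq\cdots\preceq B_0$ holds, which is what the definition of an interlacing sequence demands and what the downstream applications (e.g.\ Lemma~\ref{lem:commoninterlacer} used in the proofs of Theorems~\ref{thm:main} and \ref{thm:main2}) rely on. Your suggestion to finish via Lemma~\ref{lem:interlacingends} is the right shape of an argument, but you leave the adjacent interlacings $B_{i+1}\preceq B_i$ and the endpoint relation $B_{r-1}\preceq B_0$ unproved, deferring to an unspecified ``root-tracking'' step; that is exactly the hard part.

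The paper closes this gap by a different, shorter route: it observes that the map sending $(A_0,\ldots,A_{r-1})\mapsto(B_0,\ldots,B_{r-1})$, with $B_i=\sum_{j\le i}A_j+t\sum_{j>i}A_j$, is a standard interlacing-preserving operator, citing \cite[Proposition~2.2]{jochemko2018} and \cite[Theorem~2.3]{savage2015s}. Those results (which are proved precisely by verifying the adjacent interlacings and an endpoint condition via Lemma~\ref{lem:interlacingends}, and exploit the nonnegativity of coefficients so that Lemma~\ref{lem:basicsinterlace}(iv) applies) give the ordered conclusion directly and avoid the indirect ``all nonnegative combinations are real-rooted'' detour altogether. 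If you want to complete your version, the cleanest fix is to replace the final paragraph with an appeal to that interlacing-preserving map, or to reproduce its proof: show $B_{i+1}\preceq B_i$ using $B_i-B_{i+1}=(1-t)A_{i+1}$ together with parts (ii)--(iv) of Lemma~\ref{lem:basicsinterlace}, then $B_{r-1}\preceq B_0$ from $B_0=A_0+t\sum_{j\ge1}A_j$ and $B_{r-1}=\sum_j A_j$, and conclude with Lemma~\ref{lem:interlacingends}.
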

\begin{proof}
The proof combines the ideas of the proofs of \cite[Lemma 3.2]{jochemko2018} and \cite[Proposition 3.4]{jochemko2018}. 

Let $f(t)=(1+t+\cdots +t^{r-1})$ and $g(t)=h(t)(1+t+\cdots + t^{r-1})^{d}$. We compute
\begin{eqnarray*}
h(t)(1+t+\cdots +t^{r-1})^{d+1}&=&f(t)g(t)\\
&=&(f\hri{r}{0}(t^r)+tf\hri{r}{1}(t^r)+\cdots +t^{r-1}f\hri{r}{r-1}(t^r))\\
&&\cdot(g\hri{r}{0}(t^r)+tg\hri{r}{1}(t^r)+\cdots +t^{r-1}g\hri{r}{r-1}(t^r))\\
&=&\sum _{i=0}^{r-1}\left(t^i\sum _{k+\ell=i}f\hri{r}{k}(t^r)g\hri{r}{\ell}(t^r)+t^{i+r}\sum _{k+\ell=i+r}f\hri{r}{k}(t^r)g\hri{r}{\ell}(t^r)\right)\\
&=&\sum _{i=0}^{r-1}t^i\left(\sum _{k+\ell=i}a_{h,d}\hri{r}{\ell}(t^r)+\sum _{k+\ell=i+r}(ta_{h,d}\hri{r}{\ell})(t^r)\right) \, .
\end{eqnarray*}
In particular, 
\begin{equation}\label{eq:recursion}
a_{h,d+1}\hri{r}{i}=a_{h,d}\hri{r}{0}+a_{h,d}\hri{r}{1}+\cdots +a_{h,d}\hri{r}{i}+ta_{h,d}\hri{r}{i+1}+\cdots +ta_{h,d}\hri{r}{r-1} \, .
\end{equation}

Now we assume that $h(t)$ has only nonnegative coefficients and degree $s$. To prove the second part of the claim we use induction on $d$. If $d=0$ then 
\[
a_{h,0}\hri{r}{i}=\begin{cases} h_i & \text{ if } 1\leq i \leq r-1\\
h_0 & \text{ if } i=0  \text{ and } r>s\\
h_0+th_s & \text{ if } i=0 \text{ and }r=s \, .
\end{cases}
\]
where $h_i:=0$ if $i\not \in \{0,1,\ldots,s\}$. Thus, in all cases, $(a_{h,d}\hri{r}{r-1}, a_{h,d}\hri{r}{r-2},\ldots, a_{h,d} \hri{r}{0})$ is trivially an interlacing sequence.

If we now assume by induction that $(a_{h,d}\hri{r}{r-1}, a_{h,d}\hri{r}{r-2},\ldots, a_{h,d} \hri{r}{0})$ is an interlacing sequence then, by applying a standard interlacing preserving map (see, e.g., \cite[Proposition 2.2]{jochemko2018} or \cite[Theorem 2.3]{savage2015s}) together with Equation~\eqref{eq:recursion}, also $(a_{h,d+1}\hri{r}{r-1}, a_{h,d+1}\hri{r}{r-2},\ldots, a_{h,d+1} \hri{r}{0})$ is an interlacing sequence.
\end{proof}

\begin{proof}[Proof of Theorem~\ref{thm:main2}]
Since $r\geq s$, by Lemma~\ref{lem:ans2}, $(a_{h,d}\hri{r}{0},ta_{h,d}\hri{r}{r-1},\ldots,ta_{h,d}\hri{r}{1},ta_{h,d}\hri{r}{0})$ is an interlacing sequence. We furthermore observe 
\[
\Urd{r}{d+1}(t^kh(t))=(t^kh(t)(1+t+\cdots +t^{r-1})^{d+1})\hri{r}{0}=ta_{h,d+1}\hri{r}{r-k}
\]
for $1\leq k\leq r$. In particular, since $d+1-s\leq r$,
\[
\Urd{r}{d+1}h(t)=a_{h,d}\hri{r}{0} \preceq ta_{h,d}\hri{r}{r-d+s-1}=\Urd{r}{d+1}(t^{d-s+1}h(t))=\Urd{r}{d+1}(\mathcal I _{d+1} h(t)) \, .
\]
We moreover observe that Properties \hyperlink{H}{(H)} and \hyperlink{S}{(S)} are automatically satisfied for symmetric polynomials with nonnegative coefficients. Thus, by Proposition~\ref{lem:essential}, $\Urd{r}{d+1}h(t)$ has an interlacing decomposition for $r\geq \max\{s,d+1-s\}$.
\end{proof}
The bound in Theorem~\ref{thm:main2} is optimal. Indeed, if we consider polynomials of the form $h(t)=1+t^s$, and let $(\tilde{p},\tilde{q})$ be the symmetric decomposition of $\Urd{r}{d+1}h(t)$ then, by Corollary~\ref{cor:nonnegativity}, $\tilde{p}$ has only nonnegative coefficients for all $r\geq 0$. If $\tilde{q}$ interlaces $\tilde{p}$ then also the coefficients of $\tilde{q}$ are nonnegative as all roots have to be nonpositive. At the end of Section~\ref{sec:symmetricdecomp} we argued that $r$ thus needs to be greater or equal to $d+1-s$. Furthermore, if $\tilde{q}$ interlaces $\tilde{p}$, then $\Urd{r}{d+1}h(t)$ is real-rooted. In~\cite[Section 5]{jochemko2018} it was shown that for $h(t)=1+t^s$ the bound $r\geq s$ is sharp for $\Urd{r}{d+1}h(t)$ to be real-rooted.

We have seen that $\Urd{r}{d+1} h(t)$ has an interlacing decomposition for all $r\geq d+1$ if $h(t)$ has small degree (Proposition~\ref{prop:small}) and for $r\geq \max\{s,d+1-s\}$ if $h(t)$ is symmetric (Theorem~\ref{thm:main2}). Based on computational experiments we believe that these bounds might constitute general uniform bounds.
\begin{quest}
Let $h(t)$ be a polynomial of degree $s\leq d$ with nonnegative coefficients that satisfies Properties \hyperlink{H}{(H)} and \hyperlink{S}{(S)}. Does $\Urd{r}{d+1}h(t)$ have an interlacing symmetric decomposition for all $r\geq d+1$ (possibly even for $r\geq \max\{s,d+1-s\}$)?
\end{quest}

\section{Ehrhart theory}\label{sec:Ehrharttheory}
A lattice polytope in $\mathbb{R}^d$ is defined as the convex hull of finitely many points in the integer lattice $\mathbb{Z}^d$. Ehrhart theory is concerned with counting lattice points in lattice polytopes and their integer dilates. For a comprehensive introduction to Ehrhart theory we recommend~\cite{BR}. 

In~\cite{ehrhartRational} Ehrhart proved that the number of lattice points $\vert nP\cap \mathbb{Z}^d\vert$ in the $n$-th dilate of a lattice polytope $P\subset \R^d$ agrees with a polynomial $\Ehr_P (n)$ of degree $\dim P$ for all integers $n\geq 0$. The polynomial $\Ehr_P(n)$ is called the \textbf{Ehrhart polynomial} of $P$. 

The $h^\ast$-polynomial $h^\ast _P(t)$ of a polytope $P$ encodes $\Ehr _P(n)$ in a particular basis. If $P$ is $d$-dimensional, then the relation between $h^\ast _P(t)$ and $\Ehr _P(n)$ is given by
\[
\sum _{n\geq 0} E_P (n)t^n \ = \ \frac{h^\ast _P (t)}{(1-t)^{d+1}} \, .
\]
In particular, the degree of $h^\ast _P$ is at most $d$. 

The coefficients of the Ehrhart polynomial can be negative and rational in general. In contrast, Stanley~\cite{Stanley78} showed that the $h^\ast$-polynomial has only nonnegative integer coefficients. Subsequently, Hibi~\cite{HibiH} and Stanley~\cite{StanleyS} proved two fundamental families of inequalities for the coefficients of the $h^\ast$-polynomial, namely Property \hyperlink{H}{(H)} and Property \hyperlink{S}{(S)}. In summary, the $h^\ast$-polynomial satisfies all conditions of Theorem~\ref{thm:main}. Furthermore, for any $d$-dimensional lattice polytope $P$ and any dilation factor $r\in \mathbb{Z}_{>0}$
\[
\frac{h^\ast _{rP} (t)}{(1-t)^{d+1}} \ = \ \sum _{n\geq 0} E_{rP} (n)t^n \ = \ \sum _{n\geq 0} E_{P} (nr)t^n \ = \ \frac{\Urd{r}{d+1}h^\ast _{P} (t)}{(1-t)^{d+1}} \, ,
\]
that is, $\Urd{r}{d+1}h^\ast _{P} (t)$ is equal to the $h^\ast$-polynomial of the $r$-dilate of the lattice polytope $P$. The following corollary is therefore an immediate consequence of Theorem~\ref{thm:main}.
\begin{cor}\label{cor:Ehrrealrooted}
Let $P$ be a $d$-dimensional lattice polytope and let $h^\ast _P (t)=h_0+h_1t+\cdots +h_st^s$, $\deg h^\ast _P=s\leq d$, be the $h^\ast$-polynomial of $P$. Then $h^\ast _{rP}(t)$ has a real-rooted symmetric decomposition for all integers $r\geq \max\{s,d+1-s\}$.
\end{cor}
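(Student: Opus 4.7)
The plan is to recognize the corollary as an immediate specialization of Theorem~\ref{thm:main} to the Ehrhart setting. The bridge is the identity displayed just before the corollary, which shows that $\Urd{r}{d+1}h^\ast_P(t)=h^\ast_{rP}(t)$. So it suffices to verify that $h(t):=h^\ast_P(t)$ satisfies the three hypotheses of Theorem~\ref{thm:main}, namely nonnegativity of coefficients, Property~\hyperlink{H}{(H)} and Property~\hyperlink{S}{(S)}, with $s=\deg h^\ast_P$ and with the polytope dimension playing the role of $d$.

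Concretely, I would first quote Stanley's nonnegativity theorem~\cite{Stanley78} to see that $h_0,\ldots,h_s\geq 0$, then Hibi's inequalities~\cite{HibiH} for Property~\hyperlink{H}{(H)} and Stanley's inequalities~\cite{StanleyS} for Property~\hyperlink{S}{(S)}; all three results are already recalled in the paragraph preceding the corollary, so this step is purely a citation. Once these are in place, Theorem~\ref{thm:main} applies verbatim to $h^\ast_P(t)$ and produces a nonnegative real-rooted symmetric decomposition of $\Urd{r}{d+1}h^\ast_P(t)$ whenever $r\geq \max\{s,d+1-s\}$.

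Translating back via the Ehrhart-Veronese identity, this symmetric decomposition is precisely the symmetric decomposition (with respect to degree $d$) of $h^\ast_{rP}(t)$, which is the desired conclusion. There is essentially no obstacle to this argument beyond invoking the already-established ingredients: the analytic heart of the matter, the interlacing-sequence machinery assembled in Section~\ref{sec:realrootedness}, is entirely encapsulated in Theorem~\ref{thm:main}, and the passage from the general Veronese operator to the dilation of a lattice polytope is a one-line identification of generating functions.
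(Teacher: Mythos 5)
Your argument matches the paper's exactly: the paper also invokes Stanley's nonnegativity, Hibi's Property (H), and Stanley's Property (S) to verify the hypotheses of Theorem~\ref{thm:main}, and then uses the identity $\Urd{r}{d+1}h^\ast_P(t)=h^\ast_{rP}(t)$ to conclude. Correct and same approach.
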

A lattice polytope $P$ in $\R^d$ is called \textbf{reflexive} if
\[
P \ = \ \{\mathbf{x}\in \R^d\colon \mathbf{Ax}\leq \mathbf{1}\} \, ,
\]
up to a translation by a vector in $\mathbb{Z}^d$, where $\mathbf{A}$ is an integer matrix and $\mathbf{1}$ denotes the all $1$s vector. A polytope $P$ is called \textbf{Gorenstein} if $P$ has an integer dilate $\ell P$ that is reflexive. Hibi~\cite{Hibireflexive} showed that a polytope $P$ is reflexive if and only if $h_P^\ast(t)=t^dh^\ast _P (1/t)$.  Stanley~\cite{StanleyGorenstein} extended this characterization to Gorenstein polytopes.

As an immediate consequence of Theorem~\ref{thm:main2} we obtain the following.
\begin{cor}\label{cor:Ehrinterlacing}
Let $h^\ast _P(t)$ be the $h^\ast$-polynomial of a $d$-dimensional Gorenstein lattice polytope. Then $h^\ast _{rP} (t)$ has an interlacing decomposition whenever $r\geq \max\{s,d+1-s\}$.
\end{cor}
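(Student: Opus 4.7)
The plan is to identify Corollary~\ref{cor:Ehrinterlacing} as a direct application of Theorem~\ref{thm:main2}, once we extract the two structural facts about $h^\ast _P(t)$ that are needed: nonnegativity of coefficients and symmetry.

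First, I would recall from the discussion preceding Corollary~\ref{cor:Ehrrealrooted} that for any $d$-dimensional lattice polytope $P$ and any positive integer $r$, one has the identity $h^\ast _{rP}(t) = \Urd{r}{d+1} h^\ast _P(t)$, which reduces the question about the dilated polytope to a statement about the operator $\Urd{r}{d+1}$ applied to $h^\ast _P(t)$. Second, Stanley's nonnegativity theorem~\cite{Stanley78} guarantees that the coefficients of $h^\ast _P(t)$ are nonnegative integers.

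Next, I would invoke the Hibi--Stanley characterization of Gorenstein polytopes stated in the paragraph above the corollary: since $P$ is Gorenstein of (Ehrhart) degree $s$, the $h^\ast$-polynomial satisfies $h^\ast _P(t)=t^{s}h^\ast _P(1/t)$; that is, $h^\ast _P(t)$ is a palindromic polynomial of degree $s$ with nonnegative coefficients. These are precisely the hypotheses of Theorem~\ref{thm:main2}.

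Finally, applying Theorem~\ref{thm:main2} to $h(t) = h^\ast _P(t)$ (with $s = \deg h^\ast _P \leq d$) yields that $\Urd{r}{d+1} h^\ast _P(t)$ has a nonnegative interlacing symmetric decomposition whenever $r \geq \max\{s, d+1-s\}$. Translating through $h^\ast _{rP}(t) = \Urd{r}{d+1} h^\ast _P(t)$ gives the desired conclusion. There is no real obstacle here: this corollary is essentially a dictionary translation, and the substantive work has already been done in Theorem~\ref{thm:main2}; the only thing to verify is that the two hypotheses of that theorem (nonnegativity and symmetry of $h^\ast _P$) are indeed available for Gorenstein lattice polytopes, which is exactly what Stanley's results provide.
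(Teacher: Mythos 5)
Your proposal is correct and follows exactly the same route as the paper: identify $h^\ast_{rP}(t)=\Urd{r}{d+1}h^\ast_P(t)$, invoke Stanley's nonnegativity and the Hibi--Stanley characterization $h^\ast_P(t)=t^{s}h^\ast_P(1/t)$ for Gorenstein polytopes, and apply Theorem~\ref{thm:main2}. The paper states the corollary as an immediate consequence of Theorem~\ref{thm:main2}, which is precisely what you have spelled out.
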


\textbf{Acknowledgements:} The author wants to thank Petter Br\"and\'en and Liam Solus for interesting discussions, and Florian Kohl as well as the anonymous referee for careful reading and many helpful comments on the manuscript. This article was finished while the author was a participant at the semester program ``Algebraic and Enumerative Combinatorics'' at Mittag-Leffler-Institute in Djursholm in spring 2020. This work was partially supported by the Wallenberg AI, Autonomous Systems and Software Program (WASP) funded by the Knut and Alice Wallenberg Foundation, as well as by grant 2018-03968 of the Swedish Research Council.

\bibliographystyle{siam}
\bibliography{SymmetricDecomp}
\end{document}